\numberwithin{equation}{section} \setlength{\textwidth}{17cm}
\title
	[BHK Mirrors and Shioda Maps]
	{Berglund-H\"ubsch-Krawitz Mirrors via Shioda Maps}
\newtheorem{thm}{Theorem}[section]
\newtheorem{cor}[thm]{Corollary}
\newtheorem{lem}[thm]{Lemma}
\newtheorem{prop}[thm]{Proposition}
\newtheorem{rem}[thm]{Remark}
\newcommand{\RR}{\mathbb{R}}
\newcommand{\CC}{\mathbb{C}}
\newcommand{\ZZ}{\mathbb{Z}}
\newcommand{\PP}{\mathbb{P}}
\newcommand{\NN}{\mathbb{N}}
\newcommand{\aut}{\text{Aut}}
\newcommand{\diag}{\text{diag}}
\begin{document}

\title{Berglund-H\"ubsch-Krawitz Mirrors via Shioda Maps}

\author{Tyler L. Kelly}

\email{tykelly@math.upenn.edu}
\address{Department of Mathematics, University of Pennsylvania, 209 S. 33rd St, Philadelphia, PA 19103}
\thanks{This material is based upon work supported by the National Science Foundation under Grant No. DGE-1321851.}

\subjclass[2010]{Primary 14J33; Secondary 14E05}

\date{April 11, 2013}


\keywords{mirror symmetry, Calabi-Yau varieties, birational geometry, Berglund-H\"ubsch-Krawitz mirrors}

\begin{abstract} We give an elementary approach to proving the birationality of multiple Berglund-H\"ubsch-Krawitz (BHK) mirrors by using Shioda maps. We do this by creating a birational picture of the BHK correspondence in general. Although a similar result has been obtained in recent months by Shoemaker, our proof is new in that it sidesteps using toric geometry and drops an unnecessary hypothesis. We give an explicit quotient of a Fermat variety to which the mirrors are birational. \end{abstract}
\maketitle

\section{Introduction}

The mirror symmetry conjecture predicts that for a Calabi-Yau variety, $M$, there exists another Calabi-Yau variety, $W$, so that various geometric and physical data is exchanged between $M$ and $W$.  A classical relationship found between so-called mirror pairs is that on the level of cohomology
$$
H^{p,q}(M, \CC) \cong H^{N-p,q}(W, \CC),
$$
provided that both Calabi-Yau varieties $M$ and $W$ are $N$-dimensional.  In 1992, Berglund and H\"ubsch proposed such a mirror symmetry relationship between finite quotients of hypersurfaces in weighted projective $n$-space \cite{BH}.  
  Suppose $F_A$ is a polynomial, 
\begin{equation}
F_A = \sum_{i=0}^n \prod_{j=0}^n x_j^{a_{ij}},
\end{equation}
where $a_{ij} \in \NN$, so that there exist positive integers $q_j$ and $d$ so that $\sum_{j} a_{ij} q_j=d$ for all $i$ ( i.e., $F_A$ is quasihomogeneous). The polynomial $F_A$ cuts out a hypersurface $X_A := Z(F_A) \subset W\PP^n(q_0, \ldots, q_n)$ of dimension $N=n-1$.  Further assume that this hypersurface is a quasi-smooth Calabi-Yau variety (the Calabi-Yau condition is equivalent to $\sum_i q_i = d$ and see Section 2 for details about the quasismooth condition). Greene and Plesser proposed a mirror to $X_A$ when the polynomial $F_A$ was Fermat \cite{GP}.  Their proposed mirror for the hypersurface $X_A$ was a quotient of $X_A$ by all its phase symmetries of $X_A$ leaving the cohomology $H^{n,0}(X_A)$ invariant. The problem was that their proposal does not work well for the case when $X_A$ was not a Fermat hypersurface. Berglund and H\"ubsch proposed that the mirror of the hypersurface $X_A$ should relate to a hypersurface $X_{A^T}$ cut out by 
\begin{equation}
F_{A^T} =  \sum_{i=0}^n \prod_{j=0}^n x_j^{a_{ji}}.
\end{equation}
The hypersurface $X_{A^T}$ sits inside a different weighted-projective 4-space, $W\PP^n(r_0, \ldots, r_n)$.  Berglund and H\"ubsch proposed that the mirror of $X_A$ should be a quotient of this new hypersurface $X_{A^T}$ by a suitable subgroup $P$ of the phase symmetries.  In several examples, they showed that  $X_A$ and $X_{A^T}/P$ satisfy the classical mirror symmetry relation in that 
$$
h^{p,q} (X_A, \CC) = h^{n-1-p, q} (X_{A^T}/P, \CC).
$$

This proposal fell out of favor when Batyrev and Borisov developed the powerful toric approach (see \cite{Ba}, \cite{BB1}, and \cite{BB2}). In the 2000s, Krawitz revived Berglund and H\"ubsch's proposal by giving a rigorous mathematical description of their mirror and proving a mirror symmetry theorem on the level of Frobenius algebra structures \cite{Kr}.  

Krawitz also generalized the Berglund-H\"ubsch mirror proposal by introducing the notion of a dual group:  We start with a polynomial $F_A$. Consider the group $SL(F_A)$ of phase symmetries of $F_A$ leaving $H^{n,0}(X_A)$ invariant. Define the subgroup $J_{F_A}$ of $SL(F_A)$ to be the group consisting of the phase symmetries induced by the $\CC^*$ action on weighted-projective space (so that all elements of $J_{F_A}$ act trivially on the weighted-projective space). Take the group $G$ to be some subgroup of $SL(F_A)$ containing $J_{F_A}$, i.e., $J_{F_A} \subseteq G \subseteq SL(F_A)$.  We obtain a Calabi-Yau orbifold $Z_{A,G} := X_A/ \tilde G$ where $\tilde G := G/J_{F_A}$.   Consider the analogous groups $SL(F_{A^T})$ and $J_{F_{A^T}}$ for the polynomial $F_{A^T}$.  Krawitz defined the dual group $G^T$ relative to $G$ so that $J_{F_{A^T}} \subseteq G^T\subseteq SL(F_{A^T})$.  For precise definitions of these groups, we direct the reader to Section 2. Take the quotient $\tilde G^T:= G^T/ J_{F_{A^T}}$. The Berglund-H\"ubsch-Krawitz mirror to the orbifold $Z_{A,G} $ is the orbifold $Z_{A^T,G^T} := X_{A^T}/ \tilde G^T$.  Chiodo and Ruan proved the classical mirror symmetry statement for the mirror pair $Z_{A,G}$ and $Z_{A^T,G^T}$ is satisfied on the level of Chen-Ruan cohomology \cite{CR}:
\begin{equation}
H^{p,q}_{\text{CR}} ( Z_{A,G} , \CC) \cong H^{n-1-p,q}_{\text{CR}}(Z_{A^T, G^T}, \CC).
\end{equation}

One can compare the mirrors found in Berglund-H\"ubsch-Krawitz (BHK) mirror duality to the mirrors of Batyrev and Borisov. In Batyrev-Borisov mirror symmetry, a family of Calabi-Yau hypersurfaces in one toric variety all have mirrors that live inside a family of hypersurfaces in a different toric variety. A feature of BHK mirror symmetry is that it proposes possibly distinct mirrors of isolated points of the family in the Calabi-Yau moduli space--not mirrors of families like the work of Batyrev and Borisov.  These BHK mirrors of the isolated points may not live in the same family.  Suppose one starts with two quasihomogeneous potentials $F_A$ and $F_{A'}$ 
\begin{equation}
F_A = \sum_{i=0}^n \prod_{j=0}^n x_j^{a_{ij}}; \qquad F_{A'} = \sum_{i=0}^n \prod_{j=0}^n x_j^{a_{ij}'}.
\end{equation}
Assume that there exist positive integers $q_i, q_i'$ so that $X_A = Z(F_A) \subseteq W\PP^n(q_0, \ldots, q_n)$ and $X_{A'} = Z(F_{A'}) \subseteq W\PP^n(q_0', \ldots, q_n')$ and that $X_A$ and $X_{A'}$ are Calabi-Yau.  Take $G$ and $G'$ to be subgroups of the group of phase symmetries that leave  the respective cohomologies $H^{n,0}(X_A, \CC)$ and $H^{n,0}(X_{A'},\CC)$ to be invariant. We obtain two Calabi-Yau orbifolds $Z_{A,G}$ and $Z_{A', G'}$. One can find examples $Z_{A, G}$ and $Z_{A', G'}$ in the same family where their BHK mirrors $Z_{A^T, G^T}$ and $Z_{(A')^T, (G')^T}$ will be quotients of hypersurfaces in different weighted-projective spaces. See Section 5 for an explicit example. 

Since the mirrors proposed by BHK and Batyrev-Borisov mirror symmetry are different, we ask the question of how we can relate them.  Iritani suggested to look at the birational geometry of the mirrors $Z_{A^T, G^T}$ and $Z_{(A')^T, (G')^T}$. Shoemaker proved that when the hypersurfaces $X_A$ and $X_{A'}$ are in the same weighted-projective space and the groups $G$ and $G'$ are equal, then the BHK mirrors $Z_{A^T, G^T}$ and $Z_{(A')^T, (G')^T}$ are birational. He proves this claim by using a reinterpretation of BHK duality into toric language \cite{Sho}.  In this paper, we drop one of these hypotheses and prove the following theorem.

\begin{thm}\label{main}
Let $Z_{A,G}$ and $Z_{A',G'}$ be Calabi-Yau orbifolds as above. If the groups $G$ and $G'$ are equal, then the BHK mirrors $Z_{A^T, G^T}$ and $Z_{(A')^T, (G')^T}$ of these orbifolds are birational.
\end{thm}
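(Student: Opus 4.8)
The plan is to use the Shioda map to realize each BHK mirror birationally as a quotient of one Fermat hypersurface by a group that depends only on $G$, after which the hypothesis $G=G'$ yields the result at once. Concretely, since $A$ is invertible over $\QQ$, fix $d\in\NN$ with $dA^{-1}$ integral and set $X_d = Z(\sum_k u_k^d)\subset\PP^n$. Define the monomial (Shioda) map $\psi_{A^T}\colon \PP^n\dashrightarrow W\PP^n(r_0,\dots,r_n)$ by $x_j = \prod_i u_i^{d(A^{-1})_{ij}}$; a direct exponent count gives $\psi_{A^T}^*F_{A^T}=\sum_k u_k^d$, so $\psi_{A^T}$ restricts to a dominant, generically finite rational map $X_d\dashrightarrow X_{A^T}$. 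I would first show (the geometric input, following Shioda and using the invertibility and quasismoothness hypotheses) that this realizes $X_{A^T}$ birationally as $X_d/\ker\Psi$, where $\Psi\colon\aut(F_{\mathrm{Ferm}})\to\aut(F_{A^T})$ is the induced surjection of diagonal symmetry groups, $\Psi(\gamma)=d(A^T)^{-1}\gamma \bmod\ZZ^{n+1}$.

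Next I would bring in the orbifold quotient. With the Krawitz duality pairing written (in our conventions) as $\langle g,h\rangle = g^{T}A^{T}h \bmod 1$, the dual group is $G^T=\{h\in\aut(F_{A^T}): \langle g,h\rangle=0 \text{ for all } g\in G\}$, and a Galois-tower argument (valid since $\Psi$ is onto) identifies $Z_{A^T,G^T}=X_{A^T}/\tilde G^T$ birationally with $X_d/\Gamma$ for $\Gamma=\Psi^{-1}(G^T)$. The heart of the proof is then a one-line cancellation: for $g\in G$ and $\gamma\in\aut(F_{\mathrm{Ferm}})$,
\[ \langle g,\Psi(\gamma)\rangle = g^{T}A^{T}\,d(A^{T})^{-1}\gamma = d\,g^{T}\gamma, \]
so that
\[ \Gamma=\Bigl\{\gamma\in\tfrac1d\ZZ^{n+1}/\ZZ^{n+1} : d\,g^{T}\gamma\equiv 0 \bmod 1 \ \text{for all } g\in G\Bigr\}. \]
Every trace of $A$ has vanished: $\Gamma$ is determined by $G$, $n$, and $d$ alone. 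I would also note that the scaling element $(\tfrac1d,\dots,\tfrac1d)$ lies in $\Gamma$ precisely because $G\subseteq SL(F_A)$ forces $\sum_i g_i\in\ZZ$, so the quotient descends correctly to projective space and the $J$-groups are absorbed.

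To compare $A$ and $A'$ I would fix a single degree $D$ divisible by the denominator-clearing integers of both $A^{-1}$ and $(A')^{-1}$ and run the construction on the one Fermat $X_D$; the displayed description of $\Gamma$ becomes $\{\gamma : D\,g^{T}\gamma\equiv 0\bmod 1\}$, depending only on $G$ and $D$. Since $G=G'$, the two resulting groups coincide, whence
\[ Z_{A^T,G^T}\ \sim_{\mathrm{bir}}\ X_D/\Gamma\ =\ X_D/\Gamma'\ \sim_{\mathrm{bir}}\ Z_{(A')^T,(G')^T}, \]
which is the claim.

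I expect the real obstacle to be geometric rather than algebraic: rigorously verifying that $\psi_{A^T}$ is birational onto $X_d/\ker\Psi$—that its generic fiber is a single deck orbit, and that the base locus of the monomial map together with the loci where the torus fails to act freely can be discarded without changing the birational class. Once that is in place, the algebraic core is exactly the cancellation $\langle g,\Psi(\gamma)\rangle=d\,g^{T}\gamma$, which is immediate as soon as the Krawitz pairing and the Shioda exponents are written side by side.
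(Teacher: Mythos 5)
Your proposal is correct and takes essentially the same route as the paper: realize all four orbifolds birationally as quotients of a single Fermat hypersurface via Shioda maps (the paper's Corollary 3.2 and Proposition 4.1, which rest on the same Bini--BvGK geometric input you rightly flag as the real work), then check that the group attached to each mirror depends only on $G$ and the common degree. Your one-line cancellation $\langle g,\Psi(\gamma)\rangle = D\,g^{T}\gamma$, making the group $\Gamma$ manifestly independent of $A$, is just a cleaner packaging of the paper's Lemma 4.2, which reaches the same conclusion by matching elements via $d'B\mathbf{h}=dB'\mathbf{h}'$ inside the pairing $\langle \mathbf{s},\mathbf{h}\rangle_{B}=\mathbf{s}^{T}B\mathbf{h}$.
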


This theorem drops a hypothesis from Shoemaker's theorem in that the hypersurfaces $X_A$ and $X_{A'}$ need not be in the same weighted-projective space.  It is unclear if this is a strict generalization of Shoemaker's result as no examples of a common group of phase symmetries $G$ are known when $X_A$ and $X_{A'}$ are hypersurfaces in different weighted-projective spaces; however, the proof we present is novel in that it does not use any toric geometry but rather uses rational maps known as Shioda maps. Using Shioda maps, we show that the Calabi-Yau orbifolds $Z_{A,G}$, $Z_{A',G'}$, $Z_{A^T, G^T}$, and $Z_{(A')^T, (G')^T}$ are all birational to different finite group quotients of a Fermat variety in projective space $\PP^n$.  In the case where the group $G$ equals $G'$, we see that the mirrors $Z_{A^T, G^T}$ and $Z_{(A')^T, (G')^T}$ are birational to the same quotient of the Fermat variety.

Originally, Shioda used these maps to compute Picard numbers of Delsarte surfaces in \cite{Shi}.  These maps entered the multiple mirror literature in \cite{BvGK} where they were generalized and then used to investigate Picard-Fuchs equations of different pencils of quintics in $\PP^4$. The Shioda maps were then further generalized to look at GKZ hypergeometric systems for certain families of Calabi-Yau varieties in weighted-projective space in \cite{Bi}.  This paper provides a more concrete description of how Shioda maps relate to BHK mirror symmetry than the previous two papers, and explains the groups used in the theorems of \cite{BvGK} and \cite{Bi} in the context of BHK mirrors (see Section 3.2).  In future work, this framework will be used to probe K\"ahler moduli space of the Calabi-Yau orbifolds.

{\bf Organization of the Paper.} In Section 2, we review the BHK mirror construction and the results of Chiodo and Ruan.  In Section 3, we use the Shioda map to discuss the birational geometry of the BHK mirrors and the groups involved in \cite{BvGK} and \cite{Bi}. We then show the birationality of the Calabi-Yau orbifolds $Z_{A,G}$ and $Z_{A^T, G^T}$ to finite quotients of a Fermat variety in projective space.  In Section 4, we use the results found in Section 3 to prove Theorem \ref{main}. Section 5 concludes the paper by giving an explicit example where we take two Calabi-Yau orbifolds and show that their BHK mirrors are hypersurfaces of different quotients of weighted projective spaces. We then show that the BHK mirrors provided in the example are birational. 

 {\bf Acknowledgments.}  The author would like to thank Charles Siegel, Ting Chen, Alberto Garc\'ia-Raboso, Antonella Grassi, and Tony Pantev for their enlightening conversations on the subject.   He would like to give special thanks to his advisor, Ron Donagi, for the support, mentoring, conversations and introducing him to the Berglund-H\"ubsch literature.  This work was done under the support of a National Science Foundation Graduate Research Fellowship.


\section{Berglund-H\"ubsch-Krawitz Duality}

We start with a matrix $A$ with nonnegative integer entries $(a_{ij})_{i,j=0}^n$. Define a polynomial
$$
F_A = \sum_{i=0}^n \prod_{j=0}^n x_j^{a_{ij}}
$$
and impose the following conditions:

\begin{enumerate}

\item the matrix $A$ is invertible;

\item the polynomial $F_A$ is quasihomogeneous, i.e., there exist positive integers $q_j, d$  so that 
$$
\sum_{j=0}^n a_{ij} q_j  = d, 
$$
for all $i$; and

\item the polynomial $F_A$ is a non-degenerate potential away from the origin, i.e., we are assuming that, when viewing $F_A$ as a polynomial in $\CC^{n+1}$, $Z(F_A)$ has exactly one singular point (at the origin).  

  \end{enumerate}

\begin{rem}These conditions are restrictive. By Theorem 1 of \cite{KS}, there is a classification of such polynomials. That is, $F_A$ can be written as a sum of invertible potentials, each of which must be of one of the three so-called \emph{atomic types}:

\begin{equation}\begin{aligned}
W_{\text{Fermat}} &:= x^a, \\
W_{\text{loop}} &:= x_1^{a_1}x_2 + x_2^{a_2}x_3 + \ldots +x_{m-1}^{a_{m-1}}x_m + x_m^{a_m}x_1, \text{ and } \\
W_{\text{chain}} &:=x_1^{a_1}x_2 + x_2^{a_2}x_3 + \ldots x_{m-1}^{a_{m-1}}x_m + x_m^{a_m}.
\end{aligned}\end{equation}
\end{rem}

 Using Condition (1), we define the matrix $B = dA^{-1}$, where $d$ is a positive integer so that all the entries of $B$ are integers (note that $d$ is not necessarily the smallest such $d$). Take $e := (1, \ldots, 1)^T \in \RR^n$ and 
\[
q := Be\text{, i.e., }q_i = \sum_j b_{ij}.
\]
Then the polynomial $F_A$ defines a zero locus $X_A = Z(F_A) \subseteq W\PP^n(q_0, \ldots, q_n)$.  Indeed, with these weights, the polynomial $F_A$ is quasihomogeneous:  each monomial in $F_A$ has degree $\sum_{j=0}^n a_{ij}q_j = d$, as $Aq = ABe = de$.  Condition (2) above is used to ensure that each integer $q_i$ is positive.  

Assume further that $\sum_i q_i = d$ is the degree of the polynomial, which implies that the hypersurface $X_A$ is a Calabi-Yau variety.  Define $\text{Sing}(V)$ to be the singular locus of any variety $V$, we say the hypersurface $X_A$ is quasi-smooth if $\text{Sing}(X_A) \subseteq \text{Sing}(W\PP^n(q_0,\ldots, q_n)) \cap X_A$.  Condition (3) above implies that our hypersurface $X_A$ is quasi-smooth.  We remark that Condition (1) is used once again when we introduce the BHK mirror in Section 2.2: it ensures that the matrix $A^T$ is a matrix of exponents of a polynomial with $n+1$ monomials and $n+1$ variables.

\subsection{Group of Diagonal Automorphisms}
Let us discuss the groups of symmetries of the Calabi-Yau variety $X_{F_A}$.  Firstly, consider the scaling automorphisms of $\CC^{n+1} \setminus \{0\}$, $n\geq 2$.  There is a subgroup, $(\CC^*)^{n+1}$, of the automorphisms of $\CC^{n+1} \setminus \{0\}$.  Explicitly, an element $ (\lambda_0,\ldots, \lambda_n) \in (\CC^*)^{n+1}$ acts on any element $\mathbf{x} = (x_0,\ldots, x_n) \in \CC^{n+1} \setminus \{0\}$ by:
$$
(\lambda_0,\ldots, \lambda_n) \times (x_0,\ldots, x_n)  \longmapsto  (\lambda_0x_0,\ldots, \lambda_n x_n).
$$
We view the weighted projective $n$-space $W\PP^{n}(q_0,\ldots, q_n)$ as a quotient of $\CC^{n+1}\setminus \{0\}$ by a subgroup $\CC^* \subset (\CC^*)^{n+1}$ consisting of the elements that can be written $(\lambda^{q_0/d}, \ldots, \lambda^{q_n/d}) $ for some $\lambda \in \CC^*$.

Moreover, there is a second subgroup of $(\CC^*)^{n+1}$, denoted $\aut(F_A)$, which can be defined as 
\[
\aut(F_A) := \{ (\lambda_0, \ldots, \lambda_n) \in (\CC^*)^{n+1} | F_A(\lambda_0 x_0, \ldots, \lambda_n x_n) = F_A(x_0, \ldots, x_n)\text{ for all } (x_0,\ldots, x_n)\}.
\]
This group is sometimes referred to as the group of diagonal automorphisms or the group of scaling symmetries. Note that for $(\lambda_0,\ldots, \lambda_n)$ to be an element of $\aut(F_A)$, each monomial $\prod_{j=0}^n x_j^{a_{ij}}$ must be invariant under the action of $(\lambda_0, \ldots, \lambda_n)$. 

Using the classification of Kreuzer and Skarke (see Remark 2.1), we can see that for any polynomial of one of the atomic types that each $\lambda_i$ must have modulus 1.  If the polynomial $F_A$ is of fermat-type, then $\lambda^ax^a = x^a$ hence $\lambda^a = 1$.  If $F_A$ is of loop-type, then $\lambda_i^{a_i}\lambda_{i+1} = 1$ for all $i < a_m$, hence $\lambda_{i+1} = \lambda_i^{-a_i}$. Moreover, $\lambda_m^{a_m} \lambda_1 = 1$ hence $\lambda_1 = \lambda_m^{-a_m} = \lambda_{m-1}^{a_ma_{m-1}} = \cdots = \lambda_1^{(-1)^ma_1\cdots a_m}$. If $|\lambda| \neq 1$ then $(-1)^m a_1\ldots a_m = 1$. This would require $m$ to be even and $a_i$ to be $1$ for all $i$. However, then the degree of the polynomial, $d$, must be $q_1 + q_2$; however $d = \sum_{i=0}^n q_i$, $n \geq 2$, and $q_i >0$, hence a contradiction is reached.
Lastly, if $F_A$ is of chain-type, $\lambda_m^{a_m}x_m^{a_m} = x_m^{a_m}$, hence $|\lambda_m|^{a_m} = 1$. This implies that $|\lambda_{m-1}^{a_{m-1}}\lambda_m| = |\lambda_{m-1}^{a_{m-1}}| = 1$, and so on, hence $|\lambda_i| = 1$. Any polynomial that is a combination of such types has an analogous argument.

Since each $\lambda_i$ can be written as  $e^{i\theta_i}$, for some $\theta_i \in \RR$, we can then see that $(\lambda_0,\ldots, \lambda_n) \in \aut(F_A)$ if and only if we have that $\prod_{j=0}^n e^{ia_{ij}\theta_j} = 1$ for all $i$. The map $(\lambda_0, \ldots, \lambda_n) \mapsto (\frac{1}{2\pi i} \log (\lambda_0), \ldots, \frac{1}{2\pi i} \log(\lambda_n))$ induces an isomorphism
\begin{equation}\label{autisom}
\aut(F_A) \cong \left\{ (z_0,\ldots, z_n) \in (\RR/\ZZ)^n\text{ } \middle| \text{ }A \left(\begin{array}{c}z_0 \\ \vdots \\ z_n\end{array}\right) \in \ZZ^{n+1} \right\}.
\end{equation}
  We then observe that we can describe $\aut(F_A)$ as being generated by the elements 
\[
\rho_i = (e^{2\pi i b_{0i}/d}, \ldots, e^{2\pi b_{ni}/d}) \in (\CC^*)^{n+1}.
\]
Moreover, there is a characterization by Artebani, Boissi\`ere, and Sarti of the group $\aut(F_A)$ (Proposition 2 of \cite{ABS}):
\begin{prop}
$\emph{Aut}(F_A)$ is a finite abelian group of order $|\det A|$.  If we think of $F_A$ as a sum of atomic types, $F_{A_1}(x_0, \ldots, x_{i_1}) + \ldots + F_{A_k}(x_{i_{k-1}+1}, \ldots, x_n)$, then we may characterize the elements of $\emph{Aut}(F_A)$ as being the product of the $k$ groups $\emph{Aut}(F_{A_i})$. The groups $\emph{Aut}(F_{A_i})$ are determined based on the atomic types:
\begin{enumerate}
\item For a summand of Fermat type $W_{\text{Fermat}} = x^a$, the group $\emph{Aut}(W_{\text{Fermat}})$ is isomorphic to  $\ZZ/a\ZZ$ and generated by $\varphi = e^{2\pi i / a} \in \CC^*$.
\item For a summand of loop type $W_{\text{loop}} = x_1^{a_1}x_2 + x_2^{a_2}x_3 + \ldots +x_{m-1}^{a_{m-1}}x_m + x_m^{a_m}x_1$, the group $\emph{Aut}(W_{\text{loop}})$ is isomorphic to $ \ZZ / \Gamma\ZZ$ where $\Gamma =a_1\cdots a_m + (-1)^{m+1}$  and  generated by $(\varphi_1, \ldots, \varphi_m)\in (\CC^*)^m$, where 
\[
\varphi_1 : = e^{2\pi i (-1)^m/\Gamma}, \text{ and } \varphi_i : = e^{2\pi i (-1)^{m+1-i} a_1\cdots a_{i-1}/ \Gamma}, i \geq 2.
\]
\item For a summand of chain type, $W_{\text{chain}} = x_1^{a_1}x_2 + x_2^{a_2}x_3 + \ldots x_{m-1}^{a_{m-1}}x_m + x_m^{a_m}$, the group $\emph{Aut}(W_{\text{chain}} )$ is isomorphic to $ \ZZ/ (a_1 \cdots a_m) \ZZ$, and generated by $(\varphi_1, \ldots, \varphi_m) \in (\CC^*)^m$, where 
\[
\varphi_i = e^{2\pi i (-1)^{m+i}/a_i\cdots a_m}.
\]
\end{enumerate}
\end{prop}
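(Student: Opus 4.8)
The plan is to leverage the isomorphism \eqref{autisom}, which identifies $\aut(F_A)$ with the set of $z \in (\RR/\ZZ)^{n+1}$ whose lifts lie in $A^{-1}\ZZ^{n+1}$; since $A$ has integer entries we have $\ZZ^{n+1} \subseteq A^{-1}\ZZ^{n+1}$, so this set is exactly the finite abelian group $\Lambda := A^{-1}\ZZ^{n+1}/\ZZ^{n+1}$, a subgroup of $(\RR/\ZZ)^{n+1}$. Abelianness is then immediate. For the order, I would observe that multiplication by $A$ gives an isomorphism $A^{-1}\ZZ^{n+1} \to \ZZ^{n+1}$ sending the sublattice $\ZZ^{n+1}$ onto $A\ZZ^{n+1}$, so that
\[
|\Lambda| = [A^{-1}\ZZ^{n+1} : \ZZ^{n+1}] = [\ZZ^{n+1} : A\ZZ^{n+1}] = |\det A|.
\]

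Next I would establish the product decomposition. When $F_A = F_{A_1}(x_0,\ldots,x_{i_1}) + \cdots + F_{A_k}(x_{i_{k-1}+1},\ldots,x_n)$ is a sum of atomic potentials in disjoint blocks of variables, the exponent matrix is block diagonal, $A = \diag(A_1,\ldots,A_k)$, and hence so is $A^{-1} = \diag(A_1^{-1},\ldots,A_k^{-1})$. The lattice quotient $\Lambda$ therefore splits as a direct product over the blocks, yielding $\aut(F_A) \cong \prod_i \aut(F_{A_i})$ (consistent with $\det A = \prod_i \det A_i$). It then suffices to analyze each atomic type, and because the order of $\aut(F_{A_i})$ is already pinned down to be $|\det A_i|$, for the loop and chain types it is enough to exhibit a single element of that exact order: it must generate a cyclic group filling out the whole automorphism group.

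For the three types I would argue as follows. For a Fermat summand $x^a$ the matrix is the $1 \times 1$ matrix $(a)$, and $\aut(x^a) = \{\lambda : \lambda^a = 1\}$ is cyclic of order $a = |\det A_i|$ with generator $e^{2\pi i/a}$. For the chain and loop, writing $\lambda_j = e^{2\pi i \theta_j}$, invariance of each monomial becomes a linear relation among the $\theta_j$ modulo $\ZZ$. In the chain case the relations $a_j\theta_j + \theta_{j+1} \equiv 0$ for $j < m$ together with $a_m\theta_m \equiv 0$ show every $\theta_j$ is determined by $\theta_m$; a direct substitution confirms that the proposed tuple $\varphi_j = e^{2\pi i(-1)^{m+j}/(a_j\cdots a_m)}$ satisfies all the relations, and its first coordinate $\theta_1 = (-1)^{m+1}/(a_1\cdots a_m)$ has exact order $a_1\cdots a_m = |\det A_i|$. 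In the loop case the cyclic system $a_j\theta_j + \theta_{j+1} \equiv 0$ (indices mod $m$) forces $\theta_{j+1} = (-1)^j a_1\cdots a_j\,\theta_1$, and closing the loop gives $(-1)^{m+1}\Gamma\,\theta_1 \equiv 0$ with $\Gamma = a_1\cdots a_m + (-1)^{m+1}$; thus $\theta_1$ ranges over $\tfrac{1}{\Gamma}\ZZ/\ZZ$ and the stated generator $\varphi_1 = e^{2\pi i(-1)^m/\Gamma}$ has order $\Gamma = |\det A_i|$.

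I expect the only real obstacle to be bookkeeping rather than conceptual content. The determinant evaluation $\det A_i = a_1\cdots a_m + (-1)^{m+1}$ for the loop (a cofactor expansion of its near-circulant exponent matrix) and the tracking of the alternating signs when verifying that the displayed tuples satisfy every monomial relation modulo $\ZZ$ are the places where errors most easily slip in, so those computations are where I would be most careful. The remaining ingredients, namely the index computation for $|\Lambda|$ and the block-diagonal splitting, are routine Smith-normal-form and lattice arguments.
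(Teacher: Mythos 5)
Your proposal is correct, but there is nothing in the paper to compare it against line by line: the paper does not prove this proposition, it imports it wholesale as Proposition 2 of \cite{ABS}, and the only supporting material supplied in the text is the isomorphism \eqref{autisom} and the generators $\rho_i$. Your argument is therefore a genuine addition rather than a variant. It is also legitimately structured: \eqref{autisom} is established in the paper \emph{before} the proposition (via the modulus-one case analysis resting on the Kreuzer--Skarke classification), so you may freely quote it, and from there your lattice-index computation $|\aut(F_A)| = [A^{-1}\ZZ^{n+1} : \ZZ^{n+1}] = [\ZZ^{n+1} : A\ZZ^{n+1}] = |\det A|$ delivers finiteness, abelianness, the order, and (via block-diagonality of $A^{-1}$) the product decomposition in one stroke; the reduction of each atomic case to exhibiting a single element of order $|\det A_i|$ is sound, since an element of full order in a finite group generates it. I verified the bookkeeping you flagged as the risk points: the cofactor expansion giving $\det = a_1\cdots a_m + (-1)^{m+1}$ for the loop, the sign identity $(-1)^{m+1-i} = (-1)^{m+i-1}$ reconciling your recursion $\theta_{j+1} = (-1)^j a_1\cdots a_j\,\theta_1$ with the displayed generator, and the closing loop relation $a_m\theta_m + \theta_1 = -\Gamma/\Gamma \equiv 0$ all check out, as do the chain verifications. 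One small imprecision, harmless to the proof: in the chain case the coordinates are \emph{not} determined by $\theta_m$ --- the relation $a_j\theta_j \equiv -\theta_{j+1}$ pins down $\theta_j$ only up to $\tfrac{1}{a_j}\ZZ/\ZZ$ when read backwards; determination runs forward from $\theta_1$, with $a_m\theta_m \equiv 0$ translating into the single constraint $a_1\cdots a_m\,\theta_1 \equiv 0$. Since your count of the group order comes from the determinant and not from this remark, nothing breaks, but the sentence should be corrected if written up.
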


Note that there is some overlap in the subgroups of $(\CC^*)^{n+1}$. Let $J_{F_A}:= \aut(F_A) \cap \CC^*$.  The group $J_{F_A}$ is generated by $(e^{2\pi i q_0/d}, \ldots, e^{2\pi i q_n/d})$, which is clearly in $\aut(F_A)$ because $\sum_{j=0}^n a_{ij}q_j = d$ and the alternate description provided by the isomorphism above in Equation \ref{autisom} (moreover, $(e^{2\pi i q_0/d}, \ldots, e^{2\pi i q_n/d}) = \prod_{i=0}^n \rho_i \in \aut(F_A)$).

We now introduce the group
\[
SL(F_A) := \left\{ (\lambda_0, \ldots, \lambda_n) \in \aut(F_A) \middle| \prod_{j=0}^n \lambda_j = 1\right\}.
\]
The group $J_{F_A}$ is a subgroup of $SL(F_A)$ as a generator of $J_{F_A}$ is the element $(e^{2\pi i q_j/d})_j$ and $\prod_j e^{2\pi i q_j / d} = e^{\frac{2\pi i}{d} \sum_j q_j} = 1$. Fix a group $G$ so that $J_{F_A} \subseteq G \subseteq SL(F_A)$ and put $\tilde G := G/ J_{F_A}$.  To help summarize, we have the following diagram of groups:

\centerline{
\xymatrix{
	J_{F_A} \ar@{^{(}->}[d]\ar@{^{(}->}[r]& J_{F_A} \ar@{^{(}->}[d]\ar@{^{(}->}[r] &J_{F_A} \ar@{^{(}->}[d]\ar@{^{(}->}[r]& \CC^* \ar@{^{(}->}[d]
	\\
	G \ar@{^{(}->}[r] \ar[d]& SL(F_A)\ar@{^{(}->}[r] & \aut(F_A) \ar@{^{(}->}[r] & (\CC^*)^{n+1}\ar[d]
	\\
	\tilde{G} : = G/J_{F_A}&&& (\CC^*)^{n+1}/\CC^*
	}
}
  
Consider the Calabi-Yau orbifold, $Z_{A,G} : = X_{F_A}/\tilde G\subset W\PP^n(q_0, \ldots, q_n) / \tilde G$. We now will describe the Berglund-H\"ubsch-Krawitz mirror to it.
 
 \subsection{The Berglund-H\"ubsch-Krawitz Mirror}

In this section, we construct the BHK mirror to the Calabi-Yau orbifold $Z_{A,G}$ defined above.  Take the polynomial
\begin{equation}
F_{A^T} = \sum_{i=0}^n \prod_{j=0}^n X_j^{a_{ji}}.
\end{equation}
It is quasihomogeneous because there exist positive integers $r_i :=\sum_j b_{ji}$  so that 
\begin{equation}
F_{A^T}(\lambda^{r_0} X_0 , \ldots, \lambda^{r_n} X_n) = \lambda^{d}F_{A^T}(X_0 , \ldots,  X_n).
\end{equation}
Note that the polynomial $F_{A^T}$ cuts out a well-defined Calabi-Yau hypersurface $X_{A^T} \subseteq W\PP^n(r_0, \ldots, r_n)$. Define the diagonal automorphism group, $\aut(F_{A^T})$, analogously to $\aut(F_A)$.  By the analogous isomorphism to that in Equation \ref{autisom}, the group $\aut(F_{A^T})$ is generated by $\rho_i^T := \diag(e^{2\pi i b_{ij}/d})_{j=0}^n \in (\CC^*)^{n+1}$. Define the dual group $G^T$ relative to $G$ to be
\begin{equation}
G^T : = \left\{\prod_{i=0}^n (\rho_i^T)^{s_i} \middle| s_i \in \ZZ, \text{ where }\prod_{i=0}^n x_i^{s_i} \text{ is $G$-invariant}\right\} \subseteq \aut(F_{A^T}).
\end{equation}

\begin{lem} If the group $G$ is a subgroup of $SL(F_A)$, then the dual group $G^T$ contains the group $J_{F_{A^T}}$.  
\end{lem}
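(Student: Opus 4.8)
The plan is to reduce the containment $J_{F_{A^T}} \subseteq G^T$ to a statement about a single distinguished generator, and then to recognize the defining condition of $SL(F_A)$ inside the definition of the dual group. First I would record that, applying to $F_{A^T}$ the analogue of the description of $\aut$ following \eqref{autisom}, the group $J_{F_{A^T}}$ is cyclic, generated by $(e^{2\pi i r_0/d}, \ldots, e^{2\pi i r_n/d})$, where $r_j = \sum_i b_{ij}$ are the weights of $X_{A^T}$. I would then observe that this generator is precisely $\prod_{i=0}^n \rho_i^T$: since $\rho_i^T = \diag(e^{2\pi i b_{ij}/d})_{j=0}^n$, the $j$-th coordinate of $\prod_{i=0}^n \rho_i^T$ equals $\prod_{i=0}^n e^{2\pi i b_{ij}/d} = e^{2\pi i (\sum_i b_{ij})/d} = e^{2\pi i r_j/d}$. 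Hence it suffices to prove that $\prod_{i=0}^n \rho_i^T$ lies in $G^T$.

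Next I would appeal directly to the definition of $G^T$: the element $\prod_{i=0}^n (\rho_i^T)^{s_i}$ belongs to $G^T$ exactly when the monomial $\prod_{i=0}^n x_i^{s_i}$ is $G$-invariant. Taking $s_i = 1$ for all $i$, the generator $\prod_{i=0}^n \rho_i^T$ corresponds to the totally symmetric monomial $x_0 x_1 \cdots x_n$. Thus the whole lemma collapses to the single assertion that $x_0 \cdots x_n$ is invariant under $G$. To finish, I would unwind what $G$-invariance of a monomial means: an element $(\lambda_0, \ldots, \lambda_n) \in G$ sends $\prod_i x_i^{s_i}$ to $\bigl(\prod_i \lambda_i^{s_i}\bigr)\prod_i x_i^{s_i}$, so the monomial is $G$-invariant if and only if $\prod_i \lambda_i^{s_i} = 1$ for every element of $G$. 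For $x_0 \cdots x_n$ this condition is simply $\prod_{i=0}^n \lambda_i = 1$, which is exactly the defining relation of $SL(F_A)$. Therefore the hypothesis $G \subseteq SL(F_A)$ forces $x_0 \cdots x_n$ to be $G$-invariant, giving $\prod_{i=0}^n \rho_i^T \in G^T$ and hence $J_{F_{A^T}} \subseteq G^T$.

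The argument is essentially a matter of chasing the definitions, so I do not expect a genuine obstacle; the only real content — and the step most likely to cause a misstep — is the bookkeeping that correctly identifies the distinguished generator $\prod_{i=0}^n \rho_i^T$ of $J_{F_{A^T}}$ as the dual partner (under the recipe defining $G^T$) of the monomial $x_0 \cdots x_n$. Once that pairing is in place, the $SL(F_A)$ condition translates verbatim into the $G$-invariance demanded by the definition of $G^T$, and the containment is immediate.
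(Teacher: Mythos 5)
Your proposal is correct and follows essentially the same route as the paper's own proof: reduce to showing $\prod_{i=0}^n \rho_i^T \in G^T$, translate this via the definition of the dual group into the $G$-invariance of the monomial $x_0\cdots x_n$, and observe that this is exactly the condition $\prod_i \lambda_i = 1$ defining $SL(F_A)$. The only difference is that you spell out the identification of $\prod_{i=0}^n \rho_i^T$ with the generator $(e^{2\pi i r_0/d},\ldots,e^{2\pi i r_n/d})$ of $J_{F_{A^T}}$, which the paper leaves implicit (it records the analogous fact for $J_{F_A}$ in Section 2.1).
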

\begin{proof}
It is sufficient to show that the element $\prod_{j=0}^n \rho_j^T$ is in the dual group $G^T$. This is equivalent to $\prod_{j=0}^n x_j$ to be $G$-invariant.  Any element $(\lambda_0, \ldots, \lambda_n)$ of $G$ acts on the monomial $\prod_{j=0}^n x_j$ by $\prod_{j=0}^n \lambda_j = 1$ (as $G\subseteq SL(F_A)$).
\end{proof}

\begin{lem}
If the group $G$ contains $J_{F_A}$, then the dual group $G^T$ is contained in $SL(F_{A^T})$.
\end{lem}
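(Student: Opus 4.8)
The plan is to verify the defining condition of $SL(F_{A^T})$ directly on a general element of $G^T$. Since $SL(F_{A^T})$ is a subgroup of $\aut(F_{A^T})$, it suffices to check that $\prod_{j=0}^n \mu_j = 1$ for each element $g = \prod_{i=0}^n (\rho_i^T)^{s_i}$ of $G^T$, where by definition the exponents $s_i$ are chosen so that the monomial $\prod_{i=0}^n x_i^{s_i}$ is $G$-invariant.

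First I would compute the product of the coordinates of such a $g$. The $j$-th coordinate of $\rho_i^T$ is $e^{2\pi i b_{ij}/d}$, so the $j$-th coordinate of $g$ is $e^{2\pi i (\sum_i s_i b_{ij})/d}$, and taking the product over all $j$ gives $\prod_{j} g_j = e^{2\pi i (\sum_{i,j} s_i b_{ij})/d} = e^{2\pi i (\sum_i s_i q_i)/d}$, using that $q_i = \sum_j b_{ij}$ is the $i$-th row sum of $B$. Thus $g \in SL(F_{A^T})$ is equivalent to the single congruence $\sum_i s_i q_i \equiv 0 \pmod{d}$.

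Next I would extract exactly this congruence from the hypothesis $J_{F_A} \subseteq G$. Because $\prod_i x_i^{s_i}$ is $G$-invariant and $J_{F_A}\subseteq G$, the monomial is in particular invariant under the generator $(e^{2\pi i q_0/d}, \ldots, e^{2\pi i q_n/d})$ of $J_{F_A}$. Applying this generator to $\prod_i x_i^{s_i}$ multiplies it by $e^{2\pi i (\sum_i s_i q_i)/d}$, so invariance forces $e^{2\pi i (\sum_i s_i q_i)/d} = 1$, which is precisely the congruence found above. Combining the two computations yields $\prod_j g_j = 1$, hence $g \in SL(F_{A^T})$; since $g$ was an arbitrary element of $G^T$, we conclude $G^T \subseteq SL(F_{A^T})$.

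The computation is short, so there is no serious obstacle. The one point deserving care is recognizing that the same row sums $q_i = \sum_j b_{ij}$ govern both quantities: they appear as the product of the coordinates of the dual generator $\rho_i^T$ and as the scaling exponents of the $J_{F_A}$ generator. Noticing this coincidence is what makes the $SL$-condition for $G^T$ dual to the $J_{F_A}$-containment condition for $G$, exactly mirroring the preceding lemma in which the $SL$-condition for $G$ was seen to be dual to the $J_{F_{A^T}}$-containment for $G^T$.
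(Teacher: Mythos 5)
Your proposal is correct and takes essentially the paper's own route: the paper merely asserts that the proof is ``analogous to the lemma above,'' and your argument is exactly that analogue made explicit, reducing the condition $\prod_{j=0}^n g_j = 1$ for $g = \prod_{i=0}^n (\rho_i^T)^{s_i} \in G^T$ to the congruence $\sum_i s_i q_i \equiv 0 \pmod{d}$, which follows by applying the generator $(e^{2\pi i q_0/d}, \ldots, e^{2\pi i q_n/d})$ of $J_{F_A} \subseteq G$ to the $G$-invariant monomial $\prod_i x_i^{s_i}$. The key observation you flag --- that the row sums $q_i = \sum_j b_{ij}$ of $B$ appear both as the coordinate product of $\rho_i^T$ and as the exponents of the $J_{F_A}$ generator --- is precisely what the paper's duality between the two lemmas rests on.
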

The proof of this lemma is analogous to the lemma above.  As the dual group $G^T$ sits between $J_{F_{A^T}}$ and $SL(F_{A^T})$, define the group $\tilde G^T : = G^T/ J_{F_{A^T}}$.  We have a well-defined Calabi-Yau orbifold $Z_{A^T, G^T} := X_{A^T}/ \tilde G^T \subset W\PP^n(r_0, \ldots, r_n)/\tilde G^T$. The Calabi-Yau orbifold $Z_{A^T, G^T}$ is the BHK mirror to $Z_{A,G}$.
 
 \subsection{Classical Mirror Symmetry for BHK Mirrors}  In this section, we summarize some results of Chiodo and Ruan for BHK mirrors. This section is based on Section 3.2 of \cite{CR}. We recommend the exposition there.  Recall that we can view the weighted projective $n$-space $W\PP^n(q_0,\ldots, q_n)$ as a stack
\begin{equation}
\left[ \CC^{n+1} \setminus \{0\} / \CC^*\right]
\end{equation}
where a group element $\lambda$ of the torus $\CC^*$ acts by 
\begin{equation}
\lambda \cdot (x_0, \ldots, x_n) = (\lambda^{q_0} x_0, \ldots, \lambda^{q_n} x_n).
\end{equation}
The quotient stack $W\PP^n(q_0, \ldots, q_n)/ \tilde G$ is equivalent to the stack
\begin{equation}
\left[\CC^{n+1}\setminus\{0\} / G\CC^*\right]
\end{equation}
 so we can view the Calabi-Yau orbifold $Z_{A, G}$ as the (smooth) DM stack
 \begin{equation}
 [Z_{A,G}] : = \left[ \left\{ x \in \CC^{n+1} \setminus \{0\} | F_A(x) = 0\right\} /G\CC^*\right] \subseteq \left[\CC^{n+1}\setminus\{0\} / G\CC^*\right].
 \end{equation}
We now review the Chen-Ruan orbifold cohomology for such a stack. Intuitively speaking, it consists of a direct sum over all elements of $G\CC^*$ of $G$-invariant cohomology of the fixed loci of each element.

If $\gamma$ is an element of $G\CC^*$, take the fixed loci
 \begin{equation}\begin{aligned}
 \CC_\gamma^{n+1}:&= \{ \mathbf{x} \in \CC^{n+1}\setminus \{0\} | \gamma \cdot\mathbf{x} = \mathbf{x}\};\text{ and }  \\ X_A^\gamma :&= \{ F_A|_{\CC_\gamma^{n+1}} = 0\} \subset \CC_\gamma^{n+1}.
 \end{aligned}\end{equation}

 Fix a point $\mathbf{x} \in X_A^\gamma$. The action of $\gamma$ on the tangent space $T_{\mathbf{x}}(\{F_A = 0\})$ can be written as a diagonal matrix (when written with respect to a certain basis), $\Lambda_\gamma = \diag(e^{2\pi i a_1^\gamma}, \ldots, e^{2\pi i a_{n}^\gamma})$, for some real numbers $a_i^\gamma \in [0,1)$.  We then define the \emph{age shift} of $\gamma$, 
\begin{equation}
a(\gamma) := \frac{1}{2\pi i} \log(\det \Lambda_\gamma) = \sum_{j=1}^n a_j^\gamma.
\end{equation}
We now may define the bigraded Chen-Ruan orbifold cohomology as a direct sum of twisted sector ordinary cohomology groups:
\begin{equation}
H^{p,q}_{\text{CR}} ( [Z_{A,G}], \CC) = \bigoplus_{\gamma \in G\CC^*} H^{p-a(\gamma), q-a(\gamma)}( X_A^\gamma / G\CC^*, \CC).
\end{equation}
The degree $d$ Chen-Ruan orbifold cohomology is defined to be the direct sum
\begin{equation}
H^d_{\text{CR}} \left( [Z_{A,G}], \CC\right)  = \bigoplus_{p+q=d} H^{p,q}_{\text{CR}} ( [Z_{A,G}], \CC) .
\end{equation}
 
Continue to assume that the group $G$ contains $J_{F_A}$ and is a subgroup of $SL(F_A)$ and the hypersurface $X_A$ is Calabi-Yau. Chiodo and Ruan prove:

\begin{thm}[Theorem 2 of \cite{CR}]
Given the Calabi-Yau orbifold $Z_{A,G}$ and its BHK mirror $Z_{A^T, G^T}$ as above, one has the standard relationship between the Hodge diamonds of mirror pairs on the level of the Chen-Ruan cohomology of the orbifolds:
\[
H^{p,q}_{\text{CR}} ( [Z_{A,G}], \CC) \cong H^{n-1-p,q}_{\text{CR}}([Z_{A^T,G^T}], \CC).
\]
\end{thm}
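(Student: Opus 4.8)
The plan is to reduce both Chen-Ruan cohomology groups to a purely combinatorial comparison of Landau-Ginzburg state spaces, and then to exhibit an explicit grading-reversing bijection induced by the transpose duality $A \mapsto A^T$, $G \mapsto G^T$. First I would compute each side using the twisted-sector decomposition recorded above. For each diagonal symmetry $\gamma \in G\CC^*$ the fixed locus $\CC^{n+1}_\gamma$ is a coordinate subspace, spanned by the variables $x_j$ with $\gamma_j = 1$, and the restriction $F_A|_{\CC^{n+1}_\gamma}$ is again a nondegenerate invertible potential in those variables; that this restriction stays nondegenerate is a structural feature of invertible potentials and is where the Kreuzer-Skarke classification into Fermat, loop, and chain atomic types is used. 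Consequently $X_A^\gamma$ is itself a quasi-smooth hypersurface in a smaller weighted projective space, and the Griffiths-Steenbrink residue isomorphism describes its cohomology as graded pieces of the Milnor (Jacobian) ring of $F_A|_{\CC^{n+1}_\gamma}$. Passing to $G$-invariants and inserting the age shift $a(\gamma)$, the space $H^{*,*}_{\text{CR}}([Z_{A,G}],\CC)$ acquires an explicit bigraded monomial basis indexed by pairs $(\gamma, m)$, where $m$ is a $G$-invariant monomial in the Milnor ring of the restricted potential. This is exactly the Landau-Ginzburg B-model state space $\mathcal{H}_{F_A,G}$.

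Having run the same reduction for the mirror, the core of the argument is the Krawitz mirror map: an explicit bijection between the monomial bases of $\mathcal{H}_{F_A,G}$ and $\mathcal{H}_{F_{A^T},G^T}$. Its structural content is a duality exchanging ``group directions'' with ``monomial directions''. A sector $(\gamma, m)$ for $(A,G)$, in which $\gamma$ fixes a coordinate set $I_\gamma$ and $m$ is supported on the complementary variables, is sent to a sector for $(A^T,G^T)$ in which these two index sets are interchanged, the phases of $\gamma$ and the exponents of $m$ are traded, and everything is transported through the matrix $B = dA^{-1}$ whose columns and rows define the generators $\rho_i$ and $\rho_i^T$. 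The defining condition of $G^T$ --- that $\prod_i (\rho_i^T)^{s_i}$ be admitted precisely when $\prod_i x_i^{s_i}$ is $G$-invariant --- is exactly what guarantees that this assignment lands in the dual state space and is a bijection.

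The step I expect to be the main obstacle is verifying that this bijection realizes the bidegree change $(p,q) \mapsto (n-1-p,q)$, that is, that it preserves $q$ while sending $p$ to $n-1-p$. Unlike Serre duality, the mirror map genuinely mixes the untwisted and twisted sectors --- a monomial class on the $(A,G)$ side may be carried to a twisted-sector class on the $(A^T,G^T)$ side --- so the equality must be checked directly on the monomial-sector data. For a class labelled $(\gamma, m)$ the bidegree splits into three contributions: the degree of $m$ in the Milnor ring, the age shift $a(\gamma)$ coming from the phases on the normal directions, and the complementary-dimension shift of the fixed locus. I would reduce the computation to the atomic types, using that the index sets $I_\gamma$, the ages, and the Milnor-ring degrees are all additive across a decomposition $F_A = F_{A_1} + \cdots + F_{A_k}$, and then check each Fermat, loop, and chain block against its transpose by hand. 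The Calabi-Yau conditions $\sum_i q_i = d$ and its transpose $\sum_i r_i = d$, together with $Aq = de$ and $B = dA^{-1}$, are precisely what force the ages on a mirror pair of sectors to sum to $n-1$ in the $p$-direction while leaving the $q$-direction unchanged.
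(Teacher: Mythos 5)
You should first be aware that the paper does not prove this statement at all: it is quoted verbatim as Theorem 2 of \cite{CR}, and the paper uses it as a black box. So the relevant comparison is with the actual proof in the literature, which factors exactly the way you propose: Chiodo and Ruan establish a state-space isomorphism identifying the Chen--Ruan cohomology of $[Z_{A,G}]$ with a Landau--Ginzburg state space, and the mirror statement is then imported from Krawitz's LG-to-LG theorem \cite{Kr} matching $\mathcal{H}_{F_A,G}$ with $\mathcal{H}_{F_{A^T},G^T}$. Your outline reconstructs this architecture correctly, including the two places where real input is needed: the restriction lemma (that $F_A|_{\CC^{n+1}_\gamma}$ stays nondegenerate, which as you note uses the Kreuzer--Skarke structure --- e.g.\ in a chain block the symmetry condition propagates fixedness down the chain, so fixed loci are compatible with invertibility) and the final bidegree bookkeeping, where the Calabi--Yau conditions force the age shifts on dual sectors to implement $(p,q)\mapsto(n-1-p,q)$.

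The genuine gap is in the middle step, where you assert that the Krawitz mirror map is the ``naive swap'' --- trade the phases of $\gamma$ for the exponents of $m$ and transport through $B=dA^{-1}$, with the defining condition of $G^T$ guaranteeing bijectivity. That description is accurate only on \emph{narrow} sectors, where $\gamma$ fixes no coordinates and the sector carries a single monomial class. For \emph{broad} sectors of loop and chain blocks (where $\CC^{n+1}_\gamma\neq 0$ and the Milnor ring of the restriction is nontrivial) there is no such universal formula: Krawitz constructs the pairing case by case on atomic blocks, the well-definedness and injectivity are exactly where the work lies, and the claim that the bijection preserves the $q$-grading on broad sectors is the delicate point, not a formal consequence of the $G^T$ condition. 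Relatedly, your reduction of $H^{p,q}_{\text{CR}}$ to the LG state space via sector-by-sector Griffiths--Steenbrink is itself the content of the LG/CY correspondence: Griffiths--Steenbrink only gives the \emph{primitive} cohomology of each $X_A^\gamma/G\CC^*$, and matching the remaining ambient classes with the appropriate LG sectors, together with the global degree shift between the LG and CY gradings, occupies most of \cite{CR}. As a proof plan your proposal is the right one, but these two steps are theorems in their own right rather than verifications, and as written the proposal would not close without them.
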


This is a classical mirror symmetry theorem for such orbifolds.  We remark that in the case of orbifolds the dimension of the bigraded Chen-Ruan orbifold cohomology vector spaces and stringy Hodge numbers agree.  Moreover, we have:

\begin{cor}[Corollary 4 of \cite{CR}]
Suppose both Calabi-Yau orbifolds $Z_{A,G}$ and $Z_{A^T,G^T}$ admit smooth crepant resolutions $M$ and $W$ respectively, then we have the equality
$$
h^{p,q}(M, \CC) = h^{n-1-p,q}(W, \CC),
$$
where $h^{p,q}$ is the ordinary $(p,q)$ Hodge number.
\end{cor}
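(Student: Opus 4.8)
The plan is to deduce this corollary from the preceding theorem (Theorem 2 of \cite{CR}) by passing from Chen--Ruan orbifold cohomology to the ordinary cohomology of the crepant resolutions. The bridge is the general principle that, for a Gorenstein Calabi--Yau orbifold admitting a crepant resolution, the dimensions of the bigraded Chen--Ruan cohomology groups agree with the ordinary Hodge numbers of any such resolution. Everything reduces to assembling three facts in order.

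First I would record, as noted in the remark immediately above, that for the Calabi--Yau orbifold $Z_{A,G}$ the dimension of $H^{p,q}_{\text{CR}}([Z_{A,G}],\CC)$ equals the stringy Hodge number $h^{p,q}_{\text{st}}(Z_{A,G})$, and likewise $\dim H^{n-1-p,q}_{\text{CR}}([Z_{A^T,G^T}],\CC) = h^{n-1-p,q}_{\text{st}}(Z_{A^T,G^T})$. This identification of Chen--Ruan dimensions with stringy invariants is available for Deligne--Mumford stacks with Gorenstein (trivial canonical) singularities, which is exactly our Calabi--Yau setting, by the work of Yasuda in the motivic framework.

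Second, the key input is that the stringy Hodge numbers of a Calabi--Yau orbifold coincide with the ordinary Hodge numbers of a crepant resolution whenever one exists. Since $M \to Z_{A,G}$ is crepant, every discrepancy entering the defining data of the stringy $E$-function vanishes, so the motivic computation collapses to the ordinary Hodge--Deligne $E$-polynomial of the smooth variety $M$; thus $h^{p,q}_{\text{st}}(Z_{A,G}) = h^{p,q}(M,\CC)$, and similarly $h^{n-1-p,q}_{\text{st}}(Z_{A^T,G^T}) = h^{n-1-p,q}(W,\CC)$. En route one should observe that crepant resolutions need not be unique, but any two are birational Calabi--Yau varieties and hence share their Hodge numbers (by the Batyrev--Kontsevich comparison via motivic integration), which makes the stated equality well posed regardless of the chosen $M$ and $W$.

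Finally I would combine these with the preceding theorem. That theorem supplies the isomorphism $H^{p,q}_{\text{CR}}([Z_{A,G}],\CC) \cong H^{n-1-p,q}_{\text{CR}}([Z_{A^T,G^T}],\CC)$, hence the equality of dimensions; chaining the two identifications above then yields $h^{p,q}(M,\CC) = h^{n-1-p,q}(W,\CC)$. The main obstacle is entirely contained in the second step: the equality of stringy and ordinary Hodge numbers rests essentially on crepancy, so that all discrepancies vanish, together with the motivic change-of-variables formula, and this is where the genuine content lies. The first and third steps are formal once that comparison is in hand.
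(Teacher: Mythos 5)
Your proposal is correct and follows essentially the same route as the paper's source: the corollary is quoted from Chiodo--Ruan, whose proof likewise combines their Theorem 2 with the identification (via Yasuda's motivic results) of Chen--Ruan dimensions with stringy Hodge numbers and Batyrev's equality of stringy and ordinary Hodge numbers under a crepant resolution --- exactly the chain you assemble, and matching the paper's remark that Chen--Ruan dimensions and stringy Hodge numbers agree for such orbifolds. Your added observation that any two crepant resolutions share Hodge numbers, making the statement well posed, is a sound (if optional) refinement.
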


\section{The Shioda Map and BHK Mirrors}

We now introduce the Shioda map and relate it to BHK mirrors.  Recall the hypersurfaces $X_A$ and $X_{A^T}$ as above. Define the matrix $B$ to be $ dA^{-1}$ where $d$ is a positive integer so that $B$ has only integer entries. The Shioda maps are the rational maps
\begin{equation}\begin{aligned}
\phi_B: \PP^n \dashrightarrow W\PP^n(q_0,\ldots, q_n),\text{ and } \\
\phi_{B^T}: \PP^n \dashrightarrow W\PP^n(r_0,\ldots, r_n), 
\end{aligned}\end{equation}
where 
\begin{equation}\begin{aligned}
(y_0: \ldots:y_n) &\stackrel{\phi_B}{\mapsto} (x_0:\ldots: x_n), \quad x_j = \prod_{k=0}^n y_k^{b_{jk}}, \text{ and }\\
(y_0: \ldots:y_n) &\stackrel{\phi_{B^T}}{\mapsto} (X_0:\ldots: X_n), \quad X_j = \prod_{k=0}^n y_k^{b_{kj}}.
\end{aligned}\end{equation}

Consider the polynomial 
\begin{equation}
F_{dI} : = \sum_{i=0}^n y_i^d
\end{equation}
and the Fermat hypersurface cut out by it, $X_{dI} := Z(F_{dI}) \subset \PP^n$. Note that the Shioda maps above restrict to rational maps $X_{dI} \stackrel{\phi_B}{\dashrightarrow} X_A$ and $X_{dI} \stackrel{\phi_B}{\dashrightarrow} X_{A^T}$, respectively, allowing us to obtain the diagram: 
\begin{equation}
\xymatrix{ & X_{dI} \ar@{-->}[dl]_{\phi_B} \ar@{-->}[dr]^{\phi_{B^T}} & \\
		X_A && X_{A^T} }
\end{equation}

We now reinterpret the groups $G$ and $G^T$ in the context of the Shioda map. Any element of $\aut(F_{dI})$ is of the form $g=(e^{2\pi i h_j/d})_j$, for some integers $h_j$. When we push forward the action of $g$ via $\phi_B$, we obtain  the diagonal automorphism
\begin{equation}
(\phi_B)_*(g) : = (e^{{\frac{2\pi i }{d} \sum_{j=0}^n b_{ij}h_j}})_i \in \aut(F_A).
\end{equation}
The element $(\phi_B)_*(g)$ is a generic element of $\aut(F_{A})$, namely $\prod_{j=0}^n \rho_j^{h_j} $.  
We turn our attention to describing the dual group $G^T$ to $G$. If we push the element $g^T:= (e^{2\pi i s_i/d})_i \in \aut(F_{dI})$ down via the map $\phi_{B^T}$, then we get the action
\begin{equation}
(\phi_{B^T})_*(g^T) =(e^{\frac{2\pi i }{d} \sum_{i=0}^n s_i b_{ij}})_j= \prod_{i=1}^n (\rho^T_i)^{s_i} .
\end{equation}

 In other words, we have (surjective) group homomorphisms
\begin{equation}\begin{aligned}
(\phi_B)_*&: \aut(F_{dI}) \rightarrow \aut(F_A);\text{ and }\\
(\phi_{B^T})_*&: \aut(F_{dI}) \rightarrow \aut(F_{A^T}).
\end{aligned}\end{equation}

This gives us a new interpretation of the choice of groups $G$ and $G^T$:  both are pushforwards of subgroups of $\aut(F_{dI})$ via the Shioda maps $\phi_B$ and $\phi_{B^T}$, respectively.

\subsection{Reinterpretation of the Dual Group} 
We now reformulate the relationship between the groups $G$ and $G^T$ via a bilinear pairing. Consider the map
\[
\langle , \rangle_B: \ZZ^{n+1} \times \ZZ^{n+1} \longrightarrow \ZZ
\]
where $\langle \mathbf{s}, \mathbf{h}\rangle_B := \mathbf{s}^T B \mathbf{h}$. 
Choose a subgroup $G\subset \aut (F_A)$, so that $J_{F_A} \subseteq G$.  Then set $H : = ((\phi_B)_*)^{-1}(G)$. Note that the map $(h_j)_j \mapsto (e^{2\pi i h_j / d})_j$ induces a natural, surjective group homomorphism
\begin{equation}
\ZZ^{n+1} \stackrel{pr}{\rightarrow} \aut(F_{dI}).
\end{equation}
Take $\tilde H$ to be the inverse image $\tilde H:= pr^{-1}(H)$ of $H$ under this map. We can then define the subgroup $\tilde H^{\perp_B} \subseteq \ZZ^{n+1}$ to be 
\begin{equation}
\tilde H^{\perp_B} := \left\{ \mathbf{s} \in \ZZ^{n+1} \middle| \langle \mathbf{s}, \mathbf{h}\rangle_B \in d\ZZ \text{ for all } \mathbf{h} \in \tilde H\right\}.
\end{equation}
Define $H^{\perp_B}$ to be the image of $\tilde H^{\perp_B}$ under $pr$, $pr(\tilde H^{\perp_B})$.  

We remark that it is clear that the group $J_{F_{dI}}$ is contained by $H$ as $(\phi_B)_*(e^{2\pi i / d}, \ldots, e^{2\pi i/d}) = \prod_j \rho_j$ is a generator of $ J_{F_A}$.

We have assumed that the group $G$ is a subgroup of $SL(F_A)$. This requires that, for all group elements $\mathbf{h} = (h_k)_k \in \tilde H$, the product $\prod_{j=0}^ne^{\frac{2\pi i }{d} \sum_{k} b_{jk} h_k}$ equals $1$.  This implies that the sum  $\sum_{j,k = 0}^n b_{jk}h_k$ is an integer divisible by $d$; therefore, $(1,\ldots, 1) \in \tilde H^{\perp_B}$. So, its image $pr(1,\ldots, 1)$ must be in $H^{\perp_B}$.  The element $pr(1,\ldots, 1) = (e^{2\pi i/d}, \ldots, e^{2\pi i/d})$ is a generator of the group $J_{F_{dI}}$, hence $H^{\perp_B}$ contains $J_{F_{dI}}$.   

Moreover, if one unravels all the definitions, one can see that $(\phi_{B^T})_*(H^{\perp_B}) = G^T$. In order for a monomial $\prod_{i=0}^n x_i^{s_i}$ to be $G$-invariant, we will need, for any $\prod_{i=1}^n \rho_i^{h_i} =(e^{{\frac{2\pi i }{d} \sum_{i=0}^n b_{ij}h_j}})_i \in G$, that $\prod_{i=0}^n x_i^{s_i} = \prod_{i=0}^n (e^{{\frac{2\pi i }{d} \sum_{i=0}^n b_{ij}h_j}} x_i)^{s_i}$. This is equivalent to $\sum_{i,j} s_ib_{ij}h_j$ being a multiple of $d$. 

\subsection{Birational Geometry of BHK Mirrors}
We now give a Theorem of Bini, written in our notation (Theorem 3.1 of \cite{Bi}):
\begin{thm}
Let all the notation be as above. Then the hypersurfaces $X_A$ and $X_{A^T}$ are birational to the quotients of the Fermat variety $X_{dI}/ (((\phi_B)_*)^{-1}(J_{F_A})/J_{F_{dI}})$ and $X_{dI}/ (((\phi_{B^T})_*)^{-1}(J_{F_{A^T}})/J_{F_{dI}})$, respectively.
\end{thm}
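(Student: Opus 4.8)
The plan is to show that each Shioda map realizes the relevant Fermat quotient as a birational model of the hypersurface, and that the deck group is exactly the stated group. I will carry this out for $\phi_B\colon X_{dI}\dashrightarrow X_A$; the statement for $\phi_{B^T}\colon X_{dI}\dashrightarrow X_{A^T}$ is obtained verbatim by replacing $B$, the weights $q_i$, and $J_{F_A}$ by $B^T$, the weights $r_i$, and $J_{F_{A^T}}$, using that $X_{A^T}$ is itself Calabi-Yau ($\sum_i r_i=d$).

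First I would record two structural facts. Since $q_j=\sum_k b_{jk}$, rescaling $y_k\mapsto\lambda y_k$ sends $x_j\mapsto\lambda^{q_j}x_j$, so $\phi_B$ is a well-defined dominant rational map $\PP^n\dashrightarrow W\PP^n(q_0,\dots,q_n)$; and since $AB=dI$ one computes $\prod_j x_j^{a_{ij}}=y_i^{\,d}$, so that $\phi_B^{*}F_A=F_{dI}$. Hence $\phi_B$ restricts to a dominant map $X_{dI}\dashrightarrow X_A$, and over a generic (torus) point of $X_A$ the entire fiber of $\phi_B$ already lies in $X_{dI}$. I would also use the equivariance $\phi_B(g\cdot y)=(\phi_B)_{*}(g)\cdot\phi_B(y)$ for $g\in\aut(F_{dI})$, which is immediate from the monomial formula.

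Next set $K:=((\phi_B)_{*})^{-1}(J_{F_A})$ and $\Gamma:=K/J_{F_{dI}}$, the group appearing in the statement. Because the generator of $J_{F_{dI}}$ pushes forward to the generator of $J_{F_A}$ we have $J_{F_{dI}}\subseteq K$, and since $J_{F_{dI}}$ is exactly the subgroup of $\aut(F_{dI})$ acting trivially on $\PP^n$, the quotient $\Gamma$ acts faithfully on $X_{dI}$. Moreover $J_{F_A}$ is precisely the kernel of the action of $\aut(F_A)$ on $W\PP^n(q_0,\dots,q_n)$, so for $g\in K$ the element $(\phi_B)_{*}(g)$ acts trivially on the target; combined with equivariance this shows $\phi_B\circ g=\phi_B$, i.e.\ $\phi_B$ is $\Gamma$-invariant. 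Therefore $\phi_B$ descends to a dominant rational map $\psi\colon X_{dI}/\Gamma\dashrightarrow X_A$, and everything reduces to proving $\deg\psi=1$.

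This degree computation is the step I expect to be the main obstacle, because the honest degree of $\phi_B$ is \emph{not} $|\det B|$: one must account for the common factor $g_q:=\gcd(q_0,\dots,q_n)$ of the weights. Passing to the big tori of $\PP^n$ and $W\PP^n(q_0,\dots,q_n)$, whose character lattices are $M=\{m\in\ZZ^{n+1}:\sum_i m_i=0\}$ and $M'=\{m\in\ZZ^{n+1}:\sum_i q_im_i=0\}$, the map $\phi_B$ acts on characters by $m\mapsto B^{T}m$, so its degree is the index $[M:B^{T}M']$; using $Be=q$ a short snake-lemma argument evaluates this index as $|\det B|/g_q$. On the group side, surjectivity of $(\phi_B)_{*}$ gives $|\ker(\phi_B)_{*}|=d^{\,n+1}/|\det A|=|\det B|$, while the generator $(e^{2\pi i q_j/d})_j$ of $J_{F_A}$ has order $d/\gcd(q_0,\dots,q_n,d)$. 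Here the Calabi-Yau hypothesis $\sum_j q_j=d$ is essential: it forces $g_q\mid d$, hence $|J_{F_A}|=d/g_q$, and therefore
\[
|\Gamma|=\frac{|K|}{|J_{F_{dI}}|}=\frac{|\det B|\cdot|J_{F_A}|}{d}=\frac{|\det B|}{g_q}.
\]
Since $\phi_B$ is $\Gamma$-invariant, $\Gamma$ embeds into the (finite abelian) kernel $N$ of the induced homomorphism of big tori, and $N$ acts simply transitively on the generic fiber of $\phi_B$. As $|\Gamma|=|N|=|\det B|/g_q$, we conclude $\Gamma=N$, so the generic fiber of $\phi_B|_{X_{dI}}$ — which lies entirely in $X_{dI}$ — is a single $\Gamma$-orbit. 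Thus $\psi$ is generically injective and dominant, hence birational in characteristic zero, which is the desired assertion.
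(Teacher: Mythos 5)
Your proof is correct, but it takes a genuinely different route from the paper, which in fact does not prove this statement itself: the theorem is quoted as Theorem 3.1 of Bini \cite{Bi}, and the paper only sketches Bini's method, namely composing $\phi_B$ with the auxiliary map $q_A\colon W\PP^n(q_0,\ldots,q_n)\dashrightarrow \PP^{n+1}$, identifying the closure of the image of $q_A\circ\phi_B$ as an explicit hypersurface $\overline{M_A}\subset\PP^{n+1}$, showing that composite is birational to a quotient map, and deferring to Theorem 2.6 of \cite{BvGK} for the remaining maps and for removing Bini's hypothesis that $d$ be minimal. Your argument is instead self-contained and intrinsic to the Shioda map: equivariance $\phi_B(g\cdot y)=(\phi_B)_*(g)\cdot\phi_B(y)$, descent through $\Gamma=((\phi_B)_*)^{-1}(J_{F_A})/J_{F_{dI}}$ using that $J_{F_A}=\aut(F_A)\cap\CC^*$ is exactly the kernel of the action on weighted projective space, and then a degree match: the lattice-index computation $[M:B^TM']=|\det B|/\gcd(q_0,\ldots,q_n)$ via the snake lemma (correctly avoiding the tempting but false $\deg\phi_B=|\det B|$) against the group count $|\Gamma|=|\det B|\cdot|J_{F_A}|/d=|\det B|/\gcd(q_0,\ldots,q_n)$, where the Calabi--Yau condition $\sum_j q_j=d$ enters to give $|J_{F_A}|=d/\gcd(q_0,\ldots,q_n)$. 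What your approach buys is notable: the $\gcd$ bookkeeping makes the argument valid verbatim for any $d$ with $dA^{-1}$ integral, so it proves directly the strengthening (no minimality of $d$) that the paper obtains only by citation; what Bini's approach buys is an explicit birational model $\overline{M_A}$ in $\PP^{n+1}$, which your argument does not produce. Two small points to tighten in a final write-up: when you identify the generic fiber of $\phi_B|_{X_{dI}}$ over $X_A$ with a full fiber of $\phi_B$, add the dimension count that preimages outside the big torus form a set of dimension at most $n-2$, whose image misses a generic point of the irreducible $(n-1)$-dimensional $X_A$; and the inputs $|\aut(F_{dI})|=d^{n+1}$, $|\aut(F_A)|=|\det A|$, and surjectivity of $(\phi_B)_*$ should be cited (Proposition 2.2, following \cite{ABS}, and Section 3 of the paper).
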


We now give a few comments about the proof of the above theorem.  It is proven via composing $\phi_B$ with the map 
\begin{equation}\begin{aligned}
q_A : W\PP^n(q_0, \ldots, q_n) &\dashrightarrow \PP^{n+1};\\
(x_0:\ldots: x_n) &\longmapsto \left(\prod_j x_j : \prod_j x_j^{a_{1j}}:\ldots:\prod_j x_j^{a_{nj}}\right).
\end{aligned}\end{equation}
Note that the composition $q_A \circ \phi_B: X_{dI} \dashrightarrow \PP^{n+1}$ gives the map
\begin{equation}
(y_0:\ldots:y_n) \longmapsto \left(\prod_j y_j^{q_j'}: y_1^d:\ldots: y_n^d\right).
\end{equation}
Letting $m = \gcd(d, q_1', \ldots, q_n')$, we describe the closure of the image as \begin{equation}
\overline{M_A} := Z\left(\sum_{i=1}^n u_i, u_0^{d/m} = \prod_{i=1}^n u_i^{q_i'/m}\right) \subset \PP^{n+1}.
\end{equation}
  Bini then proves that the map $q_A\circ\phi_B$ is birational to a quotient map, which in our notation implies the birational equivalence
\begin{equation}
\overline{M_A} \simeq X_{dI} / (\phi_B^{-1}(SL(F_A))/ J_{F_{dI}}).
\end{equation}

Bini then refers the reader to the proof of Theorem 2.6 in \cite{BvGK} to see why the other two maps are birational to quotient maps as well. Note that Bini requires $d$ to be the smallest positive integer so that $dA^{-1}$ is an integral matrix, but the requirement that $d$ is the smallest such integer is unnecessary. One can just use the first part of Theorem 2.6 of \cite{BvGK} to eliminate this hypothesis.

An upshot of this reinterpretation of the theorem is that the mirror statement of BHK duality is a relation of two orbifolds birational to different orbifold quotients of the same Fermat hypersurface in projective space. Namely, $X_A/ \tilde G$ is birational to $X_{dI}/ ( ((\phi_B)_*)^{-1}(J_{F_A})+ H/J_{F_{dI}})$ while $X_{A^T}/\tilde G^T$ is birational to $X_{dI}/ (((\phi_{B^T})_*)^{-1}(J_{F_{A^T}})+ H^{\perp_B}/J_{F_{dI}})$.  As $J_{F_A} \subseteq G$ and $J_{F_{A^T}} \subseteq G^T$, 

\begin{equation}\begin{aligned}
((\phi_B)_*)^{-1}(J_{F_A}) \subseteq ((\phi_B)_*)^{-1}(G) = H; \text{ and } \\((\phi_{B^T})_*)^{-1}(J_{F_{A^T}}) \subseteq ((\phi_{B^T})_*)^{-1}(G^T) \subseteq H^{\perp_B}
\end{aligned}\end{equation}
 which gives us the following corollary:

\begin{cor}\label{binicor}
The Calabi-Yau orbifold $Z_{A,G}$ is birational to $X_{dI}/ ( H/J_{F_{dI}})$ and its BHK mirror $Z_{A^T,G^T}$  is birational to $X_{dI}/ (H^{\perp_B}/J_{F_{dI}})$.
\end{cor}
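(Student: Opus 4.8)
The plan is to deduce the corollary from the theorem of Bini above by layering the extra quotient by $\tilde G$ on top of the birational identification of $X_A$ (resp. $X_{A^T}$) with a Fermat quotient. The engine is the equivariance of the Shioda map: for $g = (e^{2\pi i h_k/d})_k \in \aut(F_{dI})$ one checks directly from the definition of $\phi_B$ that $\phi_B(g\cdot \mathbf{y}) = (\phi_B)_*(g)\cdot \phi_B(\mathbf{y})$, so $\phi_B$ intertwines the action of $g$ on $X_{dI}$ with the action of $(\phi_B)_*(g)$ on $X_A$. Since every group in sight ($\aut(F_{dI})$, $\aut(F_A)$, and their subgroups) is finite abelian, all the quotients below are automatically quotients by normal subgroups.

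First I would record the group theory. Writing $N_A := ((\phi_B)_*)^{-1}(J_{F_A})/J_{F_{dI}}$, Bini's theorem gives the birational identification $X_A \simeq X_{dI}/N_A$. Because $J_{F_A}\subseteq G$ we have $((\phi_B)_*)^{-1}(J_{F_A}) \subseteq H$, so $N_A$ is a subgroup of $H/J_{F_{dI}}$; and since $(\phi_B)_*$ is surjective onto $\aut(F_A)$ with $(\phi_B)_*(H)=G$ and $(\phi_B)_*\big(((\phi_B)_*)^{-1}(J_{F_A})\big)=J_{F_A}$, the second isomorphism theorem yields
\[
(H/J_{F_{dI}})/N_A \;\cong\; H/((\phi_B)_*)^{-1}(J_{F_A}) \;\cong\; G/J_{F_A} = \tilde G.
\]

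Next I would transport the $\tilde G$-action across the birational map. By the equivariance above, the action of $\tilde G = G/J_{F_A}$ on $X_A$ corresponds, under $X_A \simeq X_{dI}/N_A$, to the action of the quotient group $(H/J_{F_{dI}})/N_A$ on $X_{dI}/N_A$. Applying the tower-of-quotients principle $(Y/N)/(M/N)\simeq Y/M$ with $Y=X_{dI}$, $N=N_A$, and $M=H/J_{F_{dI}}$ gives
\[
Z_{A,G} = X_A/\tilde G \;\simeq\; (X_{dI}/N_A)\big/\big((H/J_{F_{dI}})/N_A\big) \;\simeq\; X_{dI}/(H/J_{F_{dI}}),
\]
which is the first assertion. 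The second is proved identically after replacing $\phi_B$, $G$, $J_{F_A}$ by $\phi_{B^T}$, $G^T$, $J_{F_{A^T}}$: here I would use the identity $(\phi_{B^T})_*(H^{\perp_B})=G^T$ together with $((\phi_{B^T})_*)^{-1}(J_{F_{A^T}})\subseteq H^{\perp_B}$ from Section 3.1, which combine to give $H^{\perp_B}=((\phi_{B^T})_*)^{-1}(G^T)$, so that the same second-isomorphism computation produces $(H^{\perp_B}/J_{F_{dI}})\big/N_{A^T}\cong\tilde G^T$ with $N_{A^T}:=((\phi_{B^T})_*)^{-1}(J_{F_{A^T}})/J_{F_{dI}}$.

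The main obstacle is not the group theory but the geometric bookkeeping: the Shioda maps are only rational, so I must check that forming the further quotient by $\tilde G$ genuinely commutes with passing to the Fermat model. Concretely, the tower-of-quotients principle has to be justified birationally—one verifies that the base loci of $\phi_B$ and $\phi_{B^T}$, together with the loci where the various finite groups fail to act freely, are contained in proper closed subsets, so that all the identifications above are isomorphisms on dense open sets. Once this is in place, the equivariance of the Shioda map supplies the compatible lift of the $\tilde G$-action, and the corollary follows formally.
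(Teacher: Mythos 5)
Your proposal is correct and follows essentially the same route as the paper: the paper likewise deduces the corollary by lifting the quotient by $\tilde G$ along the Shioda map on top of Bini's theorem, identifying $Z_{A,G}$ with $X_{dI}/\bigl(((\phi_B)_*)^{-1}(J_{F_A})+H/J_{F_{dI}}\bigr)$ and then collapsing via the containments $((\phi_B)_*)^{-1}(J_{F_A})\subseteq H$ and $((\phi_{B^T})_*)^{-1}(J_{F_{A^T}})\subseteq H^{\perp_B}$. Your version merely makes explicit the group-theoretic bookkeeping (the second isomorphism theorem and the tower-of-quotients identification) that the paper leaves implicit.
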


\section{Multiple Mirrors}

As stated in the introduction, one can take two polynomials 
\begin{equation}
F_A = \sum_{i=0}^n \prod_{j=0}^n x_j^{a_{ij}}; \qquad F_{A'} = \sum_{i=0}^n \prod_{j=0}^n x_j^{a_{ij}'}.
\end{equation}
that cut out two hypersurfaces in weighted-projective $n$-spaces, $X_A \subseteq W\PP^n(q_0,\ldots, q_n)$ and $X_{A^T} \subseteq W\PP^n(q_0',\ldots, q_0')$, respectively.  Take two groups $G$ and $G'$ so that $J_{F_A} \subseteq G \subseteq SL(F_A)$ and $J_{F_{A^T}} \subseteq G^T \subseteq SL(F_{A^T})$. We then obtain two Calabi-Yau orbifolds $Z_{A,G} : = X_A/\tilde G$ and $Z_{A', G'} : = X_{A'} / \tilde G '$.  

Even if these two orbifolds are in the same family of Calabi-Yau varieties, they may have BHK mirrors that are not in the same quotient of weighted-projective $n$-space (see Section 5 for an explicit example or Tables 5.1-3 of \cite{DG}).  Take the polynomials
\begin{equation}
F_{A^T} = \sum_{i=0}^n \prod_{j=0}^n x_j^{a_{ji}}; \qquad F_{(A')^T} = \sum_{i=0}^n \prod_{j=0}^n x_j^{a_{ji}'}.
\end{equation}
They are quasihomogeneous polynomials but not necessarily with the same weights. So they cut out hypersurfaces $X_{A^T}$ and $X_{(A')^T}$. Take the dual groups $G^T$ and $(G')^T$ to $G$ and $G'$, respectively. We quotient each hypersurface by their respective groups, $\tilde G^T := G^T/ J_{F_{A^T}}$ and $(\tilde G')^T : = (G')^T / J_{F_{(A')^T}}$. We then have the BHK mirror dualities:
\begin{equation}\begin{aligned}
Z_{A,G} &\stackrel{\text{BHK mirrors}}{\longleftrightarrow} Z_{A^T, G^T} \\
Z_{A',G'} &\stackrel{\text{BHK mirrors}}{\longleftrightarrow} Z_{(A')^T, (G')^T} 
\end{aligned}\end{equation}

In this section, we will investigate and compare the birational geometry of the BHK mirrors of the Calabi-Yau orbifolds $Z_{A, G}$ and $Z_{A',G'}$ by using the Shioda maps.  Take positive integers $d$ and $d'$ so that $B:= d A^{-1}$ and $B':= d' (A')^{-1}$ are matrices with integer entries.  Then we can form a ``tree'' diagram of Shioda maps: 
\begin{equation}
\xymatrix{
	& 	&		&	X_{dd'I} \ar@{-->}[dll]_{\phi_{d'I}} \ar@{-->}[drr]^{\phi_{dI}} &	&	& \\   &X_{dI} \ar@{-->}[dl]_{\phi_B} \ar@{-->}[dr]^{\phi_{B^T}}&&&&X_{d'I}\ar@{-->}[dl]_{\phi_{B'}} \ar@{-->}[dr]^{\phi_{(B')^T}}&\\
X_A &	& X_{A^T} & 							                      & X_{A'}&	&X_{(A')^T}
		}
\end{equation}

One can then calculate that $\phi_M \circ \phi_{cI} = \phi_{cM}$ for any integer valued matrix $M$ and positive integer $c$.   This means we can simplify our tree to just the diagram:
\begin{equation}
\xymatrix{
	& 	&		&	X_{dd'I} \ar@{-->}[dlll]_{\phi_{d'B}} \ar@{-->}[dl]^{\phi_{d'B^T}} \ar@{-->}[dr]_{\phi_{dB'}} \ar@{-->}[drrr]^{\phi_{d(B')^T}} &	&	& \\   
X_A &	& X_{A^T} & 							                      & X_{A'}&	&X_{(A')^T}
		}
\end{equation}

The Calabi-Yau orbifolds are just finite quotients of the hypersurfaces $X_A$, $X_{A^T}$, $X_{A'}$ and $X_{(A')^T}$, so we can view them in the context of the diagram:
\begin{equation}
\xymatrix{
	& 	&		&	X_{dd'I} \ar@{-->}[dlll]_{\phi_{d'B}} \ar@{-->}[dl]^{\phi_{d'B^T}} \ar@{-->}[dr]_{\phi_{dB'}} \ar@{-->}[drrr]^{\phi_{d(B')^T}} &	&	& \\   
X_A \ar[d]&	& X_{A^T}\ar[d] & 							                      & X_{A'}\ar[d]&	&X_{(A')^T}\ar[d] \\
Z_{A,G} && Z_{A^T, G^T} && Z_{A', G'} && Z_{(A')^T, (G')^T}
		}
\end{equation}
Letting $H$ and $H'$ be the groups $H:= (\phi_{d'B})_*^{-1}(G)$ and $H':= (\phi_{dB'})_*^{-1}(G')$, we know that:

\begin{prop}\label{biratprop}
The following birational equivalences hold:
\begin{equation}\begin{aligned}
Z_{A,G} &\simeq X_{dd'I}/(H/J_{F_{dd'I}}); \\
Z_{A^T, G^T} & \simeq X_{dd'I}/ (H^{\perp_{d'B}}/J_{F_{dd'I}}); \\
Z_{A', G'} &\simeq X_{dd'I}/(H'/J_{F_{dd'I}}); \text{ and } \\
Z_{(A')^T, (G')^T} &\simeq X_{dd'I}/ ((H')^{\perp_{dB'}}/J_{F_{dd'I}}).
\end{aligned}\end{equation}
\end{prop}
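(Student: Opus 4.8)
The plan is to derive all four equivalences at once from Corollary \ref{binicor} by a single change of the auxiliary integer, rather than by any fresh birational analysis. Recall that Corollary \ref{binicor} asserts, for a potential $F_A$ with $B = dA^{-1}$ integral and a group $J_{F_A}\subseteq G\subseteq SL(F_A)$, that $Z_{A,G}\simeq X_{dI}/(H/J_{F_{dI}})$ and $Z_{A^T,G^T}\simeq X_{dI}/(H^{\perp_B}/J_{F_{dI}})$, where $H = ((\phi_B)_*)^{-1}(G)$. The crucial point I would lean on is that this statement does \emph{not} require $d$ to be the minimal integer clearing denominators of $A^{-1}$: as observed in the discussion following Bini's theorem, the first part of Theorem 2.6 of \cite{BvGK} removes exactly that minimality hypothesis. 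This gives me the freedom to feed the common, larger integer $dd'$ into the corollary for both pairs simultaneously, so that all four orbifolds are realized as quotients of the \emph{single} Fermat variety $X_{dd'I}$.

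First I would note that $dd'$ is a positive integer with $dd'A^{-1} = d'B$ and $dd'(A')^{-1} = dB'$ both integral. Hence Corollary \ref{binicor} applies verbatim to the data $(F_A,G)$ with the integer $dd'$ and the matrix $d'B$ in place of $d$ and $B$: the relevant Shioda map is $\phi_{d'B}\colon X_{dd'I}\dashrightarrow X_A$, the pushforward $(\phi_{d'B})_*\colon \aut(F_{dd'I})\to\aut(F_A)$ is the surjection sending $(e^{2\pi i h_j/(dd')})_j$ to $(e^{\frac{2\pi i}{d}\sum_j b_{ij}h_j})_i$ (the factor $d'$ cancelling), and $H = (\phi_{d'B})_*^{-1}(G)$ is precisely the group named in the statement. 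This yields $Z_{A,G}\simeq X_{dd'I}/(H/J_{F_{dd'I}})$ and, from the mirror half of the corollary, $Z_{A^T,G^T}\simeq X_{dd'I}/(H^{\perp_{d'B}}/J_{F_{dd'I}})$. Running the identical argument on $(F_{A'},G')$ with the integer $dd'$ and the matrix $dB'$, using the maps $\phi_{dB'}$ and $\phi_{d(B')^T}$, produces the remaining two equivalences.

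The one place that genuinely needs care is checking that the group-theoretic bookkeeping of Section 3.1 transports faithfully under the substitution $d\mapsto dd'$, $B\mapsto d'B$. I would verify that the pairing $\langle\mathbf{s},\mathbf{h}\rangle_{d'B} = \mathbf{s}^T(d'B)\mathbf{h} = d'\,\mathbf{s}^TB\mathbf{h}$, together with the divisibility condition modulo $dd'$, defines $\tilde H^{\perp_{d'B}}$, and that the reinterpretation $(\phi_{d'B^T})_*(H^{\perp_{d'B}}) = G^T$ still holds. This is the same unravelling of $G$-invariance of monomials carried out at the end of Section 3.1, now with every occurrence of $d$ scaled by $d'$. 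Because the factor $d'$ appears symmetrically in both the pairing and the modulus, the perpendicular group computed at level $dd'$ pushes forward to exactly the dual group $G^T$, so nothing new is introduced and surjectivity of $(\phi_{d'B})_*$ guarantees $(\phi_{d'B})_*(H)=G$ exactly. I expect this consistency check, rather than any new birational geometry, to be the only real obstacle, since the birationality itself is inherited wholesale from \cite{Bi} and \cite{BvGK}.
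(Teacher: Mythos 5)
Your proposal is correct and takes essentially the same approach as the paper: the paper's proof of Proposition \ref{biratprop} is exactly an application of Corollary \ref{binicor} at the common level $dd'$, using the groups $H=(\phi_{d'B})_*^{-1}(G)$ and $H'=(\phi_{dB'})_*^{-1}(G')$ and relying on the observation (made after Bini's theorem) that $d$ need not be the minimal integer clearing the denominators of $A^{-1}$. Your consistency check that the pushforward and the pairing transport under $d\mapsto dd'$, $B\mapsto d'B$ simply makes explicit what the paper's one-line proof leaves implicit.
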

\begin{proof}
Follows directly from Corollary \ref{binicor}.
\end{proof}
Recall that we are asking for the conditions in which $Z_{A^T, G^T}$ and $Z_{(A'), (G')^T}$ are birational. This question can be answered if we can show that the groups $H^{\perp_{d'B}}$ and $(H')^{\perp_{dB'}}$ are equal.  We now prove that such an equality holds, if we assume that the groups $G$ and $G'$ are equal.

\begin{lem}
If the groups $G$ and $G'$ are equal, then $H^{\perp_{d'B}}$ and $(H')^{\perp_{dB'}}$ are equal.
\end{lem}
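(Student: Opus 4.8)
The plan is to compute both perpendicular groups $\tilde H^{\perp_{d'B}}$ and $(\tilde{H'})^{\perp_{dB'}}$ explicitly and to observe that each is determined by the subgroup $G$ (resp. $G'$) of $(\CC^*)^{n+1}$ alone, with all dependence on the matrices $B,B'$ and the degrees $d,d'$ washing out. Concretely, I would show that both lattices equal
\[
\Lambda_G := \left\{ \mathbf{s} \in \ZZ^{n+1} \,\middle|\, \textstyle\prod_{i=0}^n x_i^{s_i} \text{ is } G\text{-invariant} \right\},
\]
the very lattice appearing in the definition of the dual group. Once this is established, the hypothesis $G = G'$ gives $\Lambda_G = \Lambda_{G'}$, and since the two perpendicular groups are the images of these coinciding lattices under the \emph{same} projection $pr:\ZZ^{n+1}\to\aut(F_{dd'I})$, they coincide.

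First I would unravel $\tilde H$. Writing $pr:\ZZ^{n+1}\to\aut(F_{dd'I})$, $\mathbf{h}\mapsto (e^{2\pi i h_j/(dd')})_j$, and recalling $H=(\phi_{d'B})_*^{-1}(G)$, a direct computation using $d'B = dd'\,A^{-1}$ shows that the composite $\Psi_A := (\phi_{d'B})_* \circ pr$ sends $\mathbf{h}$ to $(e^{2\pi i (A^{-1}\mathbf{h})_i})_i \in \aut(F_A)$. Thus $\tilde H = \Psi_A^{-1}(G)$, and since the pushforwards (hence $\Psi_A$) are surjective onto $\aut(F_A)\supseteq G$, the restriction of $\Psi_A$ maps $\tilde H$ onto $G$. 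Next I would simplify the pairing: as the relevant bilinear form is $\langle \mathbf{s},\mathbf{h}\rangle_{d'B} = \mathbf{s}^T(d'B)\,\mathbf{h} = dd'\,\mathbf{s}^T A^{-1}\mathbf{h}$, the defining condition $\langle \mathbf{s},\mathbf{h}\rangle_{d'B}\in dd'\ZZ$ collapses to $\mathbf{s}^T A^{-1}\mathbf{h}\in\ZZ$. For $\mathbf{h}\in\tilde H$, the vector $\mathbf{z}:=A^{-1}\mathbf{h}$ is precisely the exponent vector of the element $g = \Psi_A(\mathbf{h}) = (e^{2\pi i z_i})_i \in G$, so integrality of $\mathbf{s}^T\mathbf{z}=\sum_i s_i z_i$ is equivalent to $\prod_i (e^{2\pi i z_i})^{s_i}=1$, i.e. to $g$ fixing the monomial $\prod_i x_i^{s_i}$. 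Because $\mathbf{h}$ ranges over all of $\tilde H$ and $\Psi_A(\tilde H)=G$, quantifying over $\mathbf{h}$ is the same as quantifying over all $g\in G$, giving $\tilde H^{\perp_{d'B}} = \Lambda_G$; the identical computation with $A',B',d$ in place of $A,B,d'$ yields $(\tilde{H'})^{\perp_{dB'}} = \Lambda_{G'}$.

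The main point, and the only place any real content enters, is the observation that the perpendicular group is intrinsic to $G$: the factors of $d,d'$ and the matrices $B,B'$ enter only through $A^{-1}\mathbf{h}$, which is exactly the logarithm of a group element of $G$, so the condition is nothing but $G$-invariance of $\prod_i x_i^{s_i}$. This is the same reformulation already used in Section 3.1 to identify $(\phi_{B^T})_*(H^{\perp_B})=G^T$, so no new ideas are required; everything else is bookkeeping. Finally, with $G=G'$ we have $\Lambda_G=\Lambda_{G'}$, and applying $pr$ to this equality of sublattices of $\ZZ^{n+1}$ gives $H^{\perp_{d'B}} = pr(\Lambda_G) = pr(\Lambda_{G'}) = (H')^{\perp_{dB'}}$, as desired.
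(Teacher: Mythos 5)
Your proof is correct, and it takes a genuinely different route from the paper's. The paper's proof of this lemma is a transfer argument: from the equality $(\phi_{d'B})_*\circ pr(\tilde H) = G = G' = (\phi_{dB'})_*\circ pr(\tilde H')$ it produces, for each $\mathbf{h}\in\tilde H$, a companion $\mathbf{h}'\in\tilde H'$ with $d'B\mathbf{h} = dB'\mathbf{h}'$ exactly (not merely modulo $dd'$ --- this uses that $\tilde H'$ is a full preimage under $pr$, so $\mathbf{h}'$ can be adjusted by vectors in $A'\ZZ^{n+1}$), whence $\langle\mathbf{s},\mathbf{h}\rangle_{d'B} = \langle\mathbf{s},\mathbf{h}'\rangle_{dB'}$ and the two perpendicular lattices coincide by a two-sided inclusion; the perpendicular lattices themselves are never computed. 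You instead determine both lattices in closed form: collapsing $\langle\mathbf{s},\mathbf{h}\rangle_{d'B}\in dd'\ZZ$ to $\mathbf{s}^T A^{-1}\mathbf{h}\in\ZZ$ and recognizing $A^{-1}\mathbf{h}$ as the logarithm of $\Psi_A(\mathbf{h})\in G$, you get $\tilde H^{\perp_{d'B}} = \Lambda_G$, with the hypothesis $G=G'$ entering only at the last step. (Your quantifier exchange is legitimate because integrality of $\mathbf{s}^T A^{-1}\mathbf{h}$ depends only on $\Psi_A(\mathbf{h})$, each individual $\mathbf{h}$ giving the condition that $\Psi_A(\mathbf{h})$ fixes $\prod_i x_i^{s_i}$.) Your intermediate statement is strictly stronger than what the paper establishes: it shows the perpendicular lattice is intrinsic to $G$, independent of $A$, $B$, and the auxiliary degrees $d,d'$, it makes transparent the identification $(\phi_{B^T})_*(H^{\perp_B}) = G^T$ from Section 3.2 rather than merely citing it, and it sidesteps the one delicate point in the paper's argument (upgrading the congruence $d'B\mathbf{h} \equiv dB'\mathbf{h}' \pmod{dd'\ZZ^{n+1}}$ to an equality, which the paper asserts without detail). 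What the paper's route buys is brevity and symmetry --- no computation of either lattice is needed --- and incidentally your reading of the pairing condition as lying in $dd'\ZZ$ is the correct adaptation of the Section 3.2 definition to the Fermat degree $dd'$, where the paper's own proof writes $d\ZZ$ (a typo) and also states its final inclusion in the reversed direction before invoking symmetry.
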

\begin{proof}

Set $\tilde H : = pr^{-1}(H) $ and $\tilde H' : = pr^{-1}(H')$ (Recall these groups from Section 3.2). Note that we have an equality of groups  $( \phi_{d'B})_*\circ pr(\tilde H ) = G = G ' = (\phi_{dB'})_*\circ pr(\tilde H')$.  This implies that, for any element $\mathbf{h} \in \tilde H$, there exists an element $\mathbf{h'} \in \tilde H'$ so that $d'B\mathbf{h} = dB' \mathbf{h}'$.   

Suppose that $\mathbf{s} \in (\tilde H')^{\perp_{dB'}} $. We claim that $\mathbf{s}$ is in $\tilde H ^{\perp_{d'B}}$, i.e., for every $\mathbf{h} \in \tilde H$, that $\langle \mathbf{s}, \mathbf{h} \rangle_{d'B} \in d \ZZ$. Indeed, this is true. Given any $\mathbf{h} \in \tilde H$, there exists some $\mathbf{h'}$ as above where $d'B\mathbf{h} = dB'\mathbf{h'}$, hence  $\langle \mathbf{s} , \mathbf{h}\rangle_{d'B} = \langle \mathbf{s}, \mathbf{h'}\rangle_{dB'} \in d\ZZ$, as $\mathbf{s} \in (\tilde H')^{\perp_{dB'}} $.  This proves that $(\tilde H)^{\perp_{d'B}} \subseteq (\tilde H')^{\perp_{dB'}}$. By symmetry, we now have the equality of the groups, $\tilde H^{\perp_{d'B}} = (\tilde H')^{\perp_{dB'}}$. 

This implies that the images of the groups  $\tilde H^{\perp_{d'B}}$ and $ (\tilde H')^{\perp_{dB'}}$ under the homomorphism $pr$ are equal, hence $H^{\perp_{d'B}}$ and $(H')^{\perp_{dB'}}$ are equal.
\end{proof}

We then have the proof of Theorem \ref{main}:
\begin{thm}[=Theorem \ref{main}]
Let $Z_{A,G}$ and $Z_{A',G'}$ be Calabi-Yau orbifolds as above. If the groups $G$ and $G'$ are equal, then the BHK mirrors $Z_{A^T, G^T}$ and $Z_{(A')^T, (G')^T}$ of these orbifolds are birational.
\end{thm}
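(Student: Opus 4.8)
The plan is to deduce the birationality from the two ingredients already assembled: Proposition \ref{biratprop}, which presents both mirrors as quotients of a single Fermat hypersurface, and the Lemma proved just above it, which identifies the two quotienting groups. The conceptual point is that comparing $Z_{A^T,G^T}$ and $Z_{(A')^T,(G')^T}$ --- objects living a priori in different quotients of different weighted-projective spaces --- should be carried out upstairs, at the common apex $X_{dd'I}\subset\PP^n$ of the collapsed Shioda tree. Once both mirrors are exhibited as birational to quotients of this one Fermat variety, the birationality between them becomes the purely arithmetic assertion that the two quotienting subgroups coincide.

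Concretely, I would first invoke Proposition \ref{biratprop} to record
\[
Z_{A^T,G^T}\simeq X_{dd'I}/(H^{\perp_{d'B}}/J_{F_{dd'I}}),\qquad Z_{(A')^T,(G')^T}\simeq X_{dd'I}/((H')^{\perp_{dB'}}/J_{F_{dd'I}}),
\]
with $H=(\phi_{d'B})_*^{-1}(G)$ and $H'=(\phi_{dB'})_*^{-1}(G')$. At this stage the entire theorem collapses to the single claim $H^{\perp_{d'B}}=(H')^{\perp_{dB'}}$, and this is exactly the content of the Lemma under the hypothesis $G=G'$. Granting it, the right-hand sides of the two displayed equivalences are literally the same quotient of $X_{dd'I}$, so each mirror is birational to it and hence, by transitivity of birational equivalence, to the other; this finishes the proof.

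Since the final assembly is formal, the genuine work --- and the step I would flag as the main obstacle --- is the Lemma's equality of perpendicular groups. Its crux is that, although $H$ and $H'$ need not agree as subgroups of $\aut(F_{dd'I})$, the surjectivity of the pushforwards onto the common group $G=G'$ furnishes, for each $\mathbf{h}\in\tilde H$, a partner $\mathbf{h}'\in\tilde H'$ with $d'B\mathbf{h}=dB'\mathbf{h}'$ as integer vectors. This single matching identity is what reconciles the two distinct bilinear pairings, since it yields $\langle\mathbf{s},\mathbf{h}\rangle_{d'B}=\mathbf{s}^T(d'B)\mathbf{h}=\mathbf{s}^T(dB')\mathbf{h}'=\langle\mathbf{s},\mathbf{h}'\rangle_{dB'}$ for every $\mathbf{s}$; the two perpendicularity conditions then transfer into one another and, by symmetry, give the equality of the perps (and, after applying $pr$, of $H^{\perp_{d'B}}$ and $(H')^{\perp_{dB'}}$). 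The one point I would verify with care is that this partner can genuinely be chosen on the nose in $\ZZ^{n+1}$ rather than only up to the kernel of $pr$, but that kernel pairs into the required sublattice and so leaves the divisibility computation undisturbed.
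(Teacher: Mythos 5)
Your proposal is correct and follows exactly the paper's own route: the theorem is deduced formally from Proposition \ref{biratprop} and the preceding Lemma (equality of $H^{\perp_{d'B}}$ and $(H')^{\perp_{dB'}}$ when $G=G'$), and your sketch of the Lemma via the matching identity $d'B\mathbf{h}=dB'\mathbf{h}'$ and the pairing computation $\langle\mathbf{s},\mathbf{h}\rangle_{d'B}=\langle\mathbf{s},\mathbf{h}'\rangle_{dB'}$ is precisely the paper's argument. Your closing caveat about choosing the partner only up to $\ker(pr)$ is a fair point of care that the paper elides, and your resolution of it is sound.
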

\begin{proof}
Follows directly from Proposition 4.1 and Lemma 4.2.
\end{proof}


\section{Example: BHK Mirrors, Shioda Maps, and Chen-Ruan Hodge Numbers}
In this section, we give an example of two Calabi-Yau orbifolds $Z_{A,G}$ and $Z_{A', G'}$ that are in the same family, but have two different BHK mirrors $Z_{A^T, G^T}$ and $Z_{(A')^T, (G')^T}$ that are not in the same family.  As mentioned before, this is a feature of BHK mirror duality that differentiates it from the mirror construction of Batyrev and Borisov.  We will show that the BHK mirrors are birational to each other and that their Chen-Ruan Hodge numbers match.
 
Consider the polynomials 
\begin{equation}\begin{aligned}
F_A : &= y_1^8 + y_2^8 + y_3^4 + y_4^3 +y_5^6; \text{ and }\\
F_{A'} :&= y_1^8 + y_2^8 + y_3^4 + y_4^3 + y_4 y_5^4.
\end{aligned}\end{equation}

The polynomials $F_A$ and $F_{A'}$ cut out hypersurfaces $X_A = Z(F_A)$ and $X_{A'} = Z(F_{A'})$, two well-defined hypersurfaces in the (Gorenstein) weighted projective 4-space $W\PP^4(3, 3, 6, 8, 4)$.  Note that they are in the same family.

We now address the groups involved in the BHK mirror construction. Set $\zeta$ to be a primitive 24th root of unity. The groups $J_{F_A}$ and $J_{F_{A'}}$ are equal and are generated by the element $ (\zeta^3, \zeta^3, \zeta^6, \zeta^8, \zeta^4) \in (\CC^*)^5$. We take $G$ and $G'$ to be the same group, namely we define it to be 
\begin{equation}
G = G' := \langle (\zeta^3, \zeta^3, \zeta^6, \zeta^8, \zeta^4), (\zeta^{18}, 1, \zeta^6, 1, 1), (1,1,\zeta^{12}, 1,\zeta^{12})\rangle.
\end{equation}
Note that each of the generators of the group $G$ are also in $SL(F_A)$ and $SL(F_{A'})$, hence the group $G$ satisfies the conditions required for BHK duality. We quotient both the hypersurfaces by $X_A$ and $X_{A'}$ by the group $G/J_{F_A}$ to obtain the Calabi-Yau orbifolds $Z_{A,G}$ and $Z_{A',G}$ which are in the same family of hypersurfaces in $W\PP^4(3,3,6,8,4)/(G/J_{F_A})$.

\subsection{BHK Mirrors}
Next, we describe the BHK mirrors to $Z_{A,G}$ and $Z_{A',G'}$. The polynomials associated to the matrices $A$ and $A^T$ are
\begin{equation}\begin{aligned}
F_{A^T} = F_A : &= y_1^8 + y_2^8 + y_3^4 + y_4^3 +y_5^6; \text{ and }\\
F_{A'^T} :&= z_1^8 + z_2^8 + z_3^4 + z_4^3z_5 + z_5^4.
\end{aligned}\end{equation}
While the hypersurface $X_{A^T} = Z(F_{A^T})$ is in $W\PP^4(3,3,6,8,4)$, they hypersurface $X_{(A')^T} = Z(F_{A'^T})$ is in a different (Gorenstein) weighted projective 4-space, namely $W\PP^4(1,1,2,2,2)$.  We can compute the following groups: 
\begin{equation}\begin{aligned}
J_{F_{A^T}} &= \langle (\zeta^3, \zeta^3, \zeta^6, \zeta^8,\zeta^4)\rangle;\\
J_{F_{(A')^T}} &=  \langle (\zeta^3, \zeta^3, \zeta^6, \zeta^6,\zeta^6)\rangle; \\
G^T &= \langle (\zeta^3, \zeta^3, \zeta^6, \zeta^8,\zeta^4)\rangle;\text{ and } \\
(G')^T &= \langle (\zeta^3, \zeta^3,\zeta^6,\zeta^6,\zeta^6), (1,1,1, \zeta^{12},\zeta^{12})\rangle.
\end{aligned}\end{equation}

Note that the groups $G^T$ and $J_{F_{A^T}}$ are equal, so the BHK mirror $Z_{A^T,G^T}$ is the hypersurface $X_{A^T}$.  On the other hand the quotient group $(G')^T/J_{F_{(A')^T}}$ is isomorphic to $\ZZ_2$, hence the BHK mirror $Z_{(A')^T, (G')^T}$ is the Calabi-Yau orbifold $X_{(A')^T}/ \ZZ_2$.  Note that the Calabi-Yau orbifold $Z_{A^T,G^T}$ is a hypersurface in $ W\PP^4(3,3,6,8,4)$, while $Z_{(A')^T, (G')^T}$ is in $W\PP(1,1,2,2,2)/\ZZ_2$. The two BHK mirrors are not hypersurfaces of the same quotient of weighted-projective spaces, hence not sitting inside the same family of Calabi-Yau orbifolds.  

\subsection{Shioda Maps}
Even though the two BHK mirrors $Z_{A^T, G^T}$ and $Z_{(A')^T, (G')^T}$ do not sit in the same family of hypersurfaces of the same quotient of weighted-projective space, we can show that they are birational.  Take the matrices $B:= 24A^{-1}$ and $B' : = 24(A')^{-1}$. Let $X_{24I}$ be the hypersurface $Z(x_1^{24} + x_2^{24} +x_3^{24} +x_4^{24} +x_5^{24} )\subset \PP^4$. We then have the Shioda maps 

\begin{equation}
\xymatrix{
	& 	&		&	X_{24I} \ar[dlll]_{\phi_{B}} \ar[dl]^{\phi_{B^T}} \ar@{-->}[dr]_{\phi_{B'}} \ar@{-->}[drrr]^{\phi_{(B')^T}} &	&	& \\   
X_A &	& X_{A^T} & 							                      & X_{A'}&	&X_{(A')^T}
		}
\end{equation}

The maps then can be described explicitly:
\begin{equation}\begin{aligned}
\phi_B(x_1:\ldots: x_5) &= (x_1^3: x_2^3:x_3^6:x_4^8:x_5^4) \in X_A \\
\phi_{B^T}(x_1:\ldots: x_5) &= (x_1^3: x_2^3:x_3^6:x_4^8:x_5^4) \in X_{A^T} \\
\phi_{B'}(x_1:\ldots: x_5) &= (x_1^3: x_2^3:x_3^6:x_4^8:x_4^{-2}x_5^6)\in X_{A'} \\
\phi_{(B')^T} (x_1:\ldots: x_5) &= (x_1^3: x_2^3:x_3^6:x_4^8x_5^{-2}:x_5^6)\in X_{(A')^T} \\
\end{aligned}\end{equation}

The four Shioda maps are rational maps that are birational to quotient maps. Take the following four subgroups to $\aut(F_{24I})$:
\begin{equation}\begin{aligned}
H &:= \langle (\zeta, \zeta,\zeta,\zeta,\zeta), (\zeta^8, 1,1,1,1), (\zeta^2, 1, \zeta^{-1}, 1,1), (1,1,\zeta^2, 1,\zeta^3),(1,1,1,\zeta,\zeta^4)\rangle; \\
H' &:= \langle  (\zeta, \zeta,\zeta,\zeta,\zeta), (\zeta^8, 1,1,1,1), (\zeta^2, 1, \zeta^{-1}, 1,1), (1,1,\zeta^2, 1, \zeta^2), (1,1,1,\zeta^3,\zeta)\rangle; \\
H^{\perp_B} = (H')^{\perp_{B'}}& :=\langle  (\zeta, \zeta,\zeta,\zeta,\zeta), (\zeta^8, 1,1,1,1), (1,1,\zeta^4, 1,1), (\zeta^2, \zeta^2, \zeta^2, 1,1), (1,1,1,\zeta,\zeta^4)\rangle; \\
J_{F_{24I}} &= \langle (\zeta,\zeta,\zeta,\zeta,\zeta)\rangle.
\end{aligned}\end{equation}

By Proposition \ref{biratprop}, we have the following birational equivalences
\begin{equation}\begin{aligned}
Z_{A,G} &\simeq X_{24I}/\langle (\zeta^8, 1,1,1,1), (\zeta^2, 1, \zeta^{-1}, 1,1), (1,1,\zeta^2, 1,\zeta^3),(1,1,1,\zeta,\zeta^4)\rangle; \\
Z_{A', G'} &\simeq X_{24I}/\langle (\zeta^8, 1,1,1,1), (\zeta^2, 1, \zeta^{-1}, 1,1), (1,1,\zeta^2, 1, \zeta^2), (1,1,1,\zeta^3,\zeta)\rangle; \\
Z_{A^T, G^T} & \simeq X_{24I}/ \langle  (\zeta^8, 1,1,1,1), (1,1,\zeta^4, 1,1), (\zeta^2, \zeta^2, \zeta^2, 1,1), (1,1,1,\zeta,\zeta^4)\rangle;\text{ and }  \\
Z_{(A')^T, (G')^T} &\simeq X_{24I}/ \langle  (\zeta^8, 1,1,1,1), (1,1,\zeta^4, 1,1), (\zeta^2, \zeta^2, \zeta^2, 1,1), (1,1,1,\zeta,\zeta^4)\rangle.
\end{aligned}\end{equation}
So we can see that the BHK mirrors $Z_{A^T, G^T}$ and $Z_{(A')^T, (G')^T}$ are birational.
\subsection{Chen-Ruan Hodge Numbers}

As the Calabi-Yau orbifolds $Z_{A,G}$ and $Z_{A',G}$ are quasismooth varieties in the same toric variety, namely $W\PP^4(3,3,6,8,4)/\langle (-i,1,i,1,1), (1,1,-1,1,-1)\rangle$, they have the same Chen-Ruan Hodge numbers. By the theorem of Chiodo and Ruan, this means that their BHK mirrors $Z_{A^T, G^T}$ and $Z_{(A')^T, (G')^T}$ must have the same Chen-Ruan Hodge numbers. We now check this explicitly.

Consider the hypersurface $X_{A^T} \subseteq W\PP^4(3,3,6,8,4)$. The dual group $G^T$ is equal to the group $J_{F_{A^T}}$. The only elements of the group $G^T\CC^*$ that will have nontrivial fixed loci are in $J_{F_{A^T}}$ as the weighted projective space is Gorenstein. The group $J_{F_{A^T}}$ has exactly six elements which have fixed loci that have nonempty intersections with the hypersurface: 

\begin{center}
  \begin{tabular}{ c | c }
    
    Element of $J_{F_{A^T}}$ & Fixed Locus \\ \hline
    $(1,1,1,1,1)$ & $X_{A^T}$ \\ \hline
    $(\zeta^{18}, \zeta^{18}, \zeta^{12}, 1,1)$ & 	$Z(y_1,y_2,y_3)\cap X_{A^T}$	 \\   \hline
    $(1,1,1, \zeta^{16}, \zeta^8)$ & $Z(y_4,y_5)\cap X_{A^T}$ \\ \hline
    $(\zeta^{12}, \zeta^{12},1,1,1)$ & $Z(y_1, y_2) \cap X_{A^T}$ \\ \hline
    $(1,1,1, \zeta^{8}, \zeta^{16})$ & $Z(y_4,y_5)\cap X_{A^T}$ \\ \hline 
    $(\zeta^{6}, \zeta^{6}, \zeta^{12}, 1,1)$ & $Z(y_1,y_2,y_3)\cap X_{A^T}$ 
  \end{tabular}
\end{center}

We can just then compute the Hodge numbers by using the Griffiths-Dolgachev-Steenbrink formulas (see \cite{Do}).  This computation gives us that $X_{A^T}$ has a Hodge diamond of: 
\[ \begin{array}{ccccccc}
 &&  & 1 &  && \\
& & 0 &  & 0 && \\
&0 &  & 1 &  & 0&\\
 1&& 36 &  & 36 & &1\\
 &0 &  & 1 &  & 0&\\
 & & 0 &  & 0 && \\
  &&  & 1 &  && \\\end{array}
 \]
The remaining fixed loci are simpler: $Z(y_1,y_2,y_3)\cap X_{A^T}$ consists of three points, $Z(y_4,y_5)\cap X_{A^T}$ is a curve of genus nine, and $Z(y_1, y_2) \cap X_{A^T}$ is a curve of genus one. After considering the age shift, one obtains the Chen-Ruan Hodge diamond of the Calabi-Yau orbifold $Z_{A,G}$:
\[ \begin{array}{ccccccc}
 &&  & 1 &  && \\
& & 0 &  & 0 && \\
&0 &  & 7 &  & 0&\\
 1&& 55 &  & 55 & &1\\
 &0 &  & 7 &  & 0&\\
 & & 0 &  & 0 && \\
  &&  & 1 &  && \\\end{array}
 \]

Next, we check that these are the same Chen-Ruan Hodge numbers as the Calabi-Yau orbifold $Z_{(A')^T, (G')^T}$. Recall that $X_{A'^T} \subset W\PP^4(1,1,2,2,2)$, so we will have a different $\CC^*$ action. The group $(G')^T$ equals the group $ J_{F_{(A')^T}}\cdot \langle (1,1,1,-1,-1)\rangle$. As the weighted-projective space is Gorenstein, we can only look at $(G')^T$ to find the nontrivial fixed loci of elements. The group $(G')^T$ only has five elements that will have nonempty intersections between the hypersurface and the fixed loci of the elements:

\begin{center}
  \begin{tabular}{ c | c }
    
    Element of $(G')^T $ & Fixed Locus \\ \hline
     $(1,1,1,1,1)$ &  $X_{(A')^T}$ \\ \hline
     $(\zeta^{12},\zeta^{12},1,1,1)$ & $Z(z_1,z_2)\cap X_{(A')^T}$ \\ \hline
     $(\zeta^6,\zeta^6, \zeta^{12},1,1)$ & $Z(z_4,z_5) \cap X_{(A')^T}$ \\ \hline
     $(\zeta^6,\zeta^6, \zeta^{12},1,1)$ & $Z(z_1,z_2,z_3) \cap X_{(A')^T}$ \\ \hline
     $(\zeta^{18},\zeta^{18}, \zeta^{12},1,1)$ & $Z(z_1,z_2,z_3) \cap X_{(A')^T}$
  \end{tabular}
\end{center}

One then computes the cohomology of each fixed locus and  finds the piece invariant under the action of the group $\ZZ_2$ generated by $(1,1,1,-1,-1)$. The ($\ZZ_2$)-invariant part of the cohomology of $X_{(A')^T}$ gives the Hodge diamond:
\[ \begin{array}{ccccccc}
 &&  & 1 &  && \\
& & 0 &  & 0 && \\
&0 &  & 1 &  & 0&\\
 1&& 45 &  & 45 & &1\\
 &0 &  & 1 &  & 0&\\
 & & 0 &  & 0 && \\
  &&  & 1 &  && \\\end{array}
 \]
 $Z(z_1,z_2)\cap X_{(A')^T}$ is a curve with a $\ZZ_2$ invariant $h^{0,1}=1$, $Z(z_4,z_5) \cap X_{(A')^T}$ is a $\ZZ_2$-invariant curve of genus nine, and $Z(z_1,z_2,z_3) \cap X_{(A')^T}$ is a set of four $\ZZ_2$-invariant points.  After considering the age shift, one obtains the Chen-Ruan Hodge diamond of $Z_{(A')^T, (G')^T}$: 
\[ \begin{array}{ccccccc}
 &&  & 1 &  && \\
& & 0 &  & 0 && \\
&0 &  & 7 &  & 0&\\
 1&& 55 &  & 55 & &1\\
 &0 &  & 7 &  & 0&\\
 & & 0 &  & 0 && \\
  &&  & 1 &  && \\\end{array}
 \]
 
Note that this Chen-Ruan Hodge diamond matches that of the Calabi-Yau orbifold $Z_{A,G}$.  To summarize, what we have given here is two Calabi-Yau orbifolds $Z_{A,G}$ and $Z_{A', G'}$ that live in a family of hypersurfaces in a finite quotient of a weighted-projective space. Their BHK mirrors $Z_{A^T, G^T}$ and $Z_{(A')^T, (G')^T}$ do not sit in a single family, unlike the mirrors proposed by Batyrev and Borisov. However, the two BHK mirrors have the same Chen-Ruan Hodge number and are birationally equivalent to one another, as both are birational to the same finite quotient of a Fermat hypersurface of $\PP^4$. 

 \bibliographystyle{amsplain}

\end{document}